\DeclareMathOperator{\lk}{lk} \DeclareMathOperator{\cone}{Cone}
 \DeclareMathOperator{\st}{st}
\DeclareMathOperator{\ver}{Vert} 
 \DeclareMathOperator{\fac}{Fac}
\newcommand{\Zo}{\mathbb{Z}}
\newcommand{\Ro}{\mathbb{R}}
\newcommand{\Co}{\mathbb{C}}
\newcommand{\ko}{\Bbbk}
\newcommand{\eqd}{\stackrel{\text{\tiny def}}{=}}
\newcommand{\minel}{\hat{0}}
\newcommand{\opst}{\st^{\circ}}
\newcommand{\ca}[1]{\mathcal{#1}}
\newcommand{\cah}[1]{\widehat{\mathcal{#1}}}
\newcommand{\sta}[1]{(\ast_{#1})}
\newcommand{\inc}[2]{[#1:#2]}
\newcommand{\E}[1]{(E_{#1})}
\newcommand{\dif}[1]{(d_{#1})}
\newcommand{\prh}[1]{\widehat{\mathcal{#1}}'}
\newcommand{\new}[1]{\overline{\mathcal{#1}}}
\newcommand{\less}[1]{\stackrel{#1}{<}}
\newcommand{\ld}{\texttt{N}}
\newcommand{\conviso}{\stackrel{\cong}{\Rightarrow}}
\newcommand{\A}{\ca{A}}
\newcommand{\Cc}{C} 
\newcommand{\I}{\ca{I}}
\newcommand{\La}{\ca{L}}
\newcommand{\loc}{\ca{U}}
\newcommand{\Lah}{\widehat{\ca{L}}}
\newcommand{\Pih}{\widehat{\Pi}}
\newcommand{\Hr}{\widetilde{H}}
\newcommand{\dd}{\partial}
\newcommand{\hh}{\mathcal{H}}
\newcommand{\hht}{\underline{\mathcal{H}}}
\newcommand{\dtot}{d_{Tot}}
\newcommand{\ST}[1]{\mbox{\upshape\scriptsize #1}}
\newcommand{\cat}{\ST{CAT}}
\newcommand{\MOD}{\ST{MOD}}
\newcounter{stmcounter}[section]
\numberwithin{equation}{section}
\theoremstyle{plain}
\newtheorem{cor}[stmcounter]{Corollary}
\newtheorem{stm}[stmcounter]{Statement}
\newtheorem{thm}[stmcounter]{Theorem}
\newtheorem{lemma}[stmcounter]{Lemma}
\newtheorem{defin}[stmcounter]{Definition}
\theoremstyle{definition}
\newtheorem{ex}[stmcounter]{Example}
\newtheorem{rem}[stmcounter]{Remark}
\newtheorem{con}[stmcounter]{Construction}
\begin{document}

\title[Sheaves over Buchsbaum posets]{Locally standard torus actions and sheaves over Buchsbaum posets}

\author[Anton Ayzenberg]{Anton Ayzenberg}
\address{Department of Mathematics, Osaka City University, Sumiyoshi-ku, Osaka 558-8585, Japan.}
\email{ayzenberga@gmail.com}

\date{\today}
\thanks{The author is supported by the JSPS postdoctoral fellowship program.}
\subjclass[2010]{Primary 57N65, 55R20; Secondary 55R91, 18F20,
55N30, 55U30, 18G40} \keywords{locally standard action, orbit type
filtration, characteristic function, manifold with corners,
simplicial poset, coskeleton filtration, sheaf over poset,
Zeeman--McCrory spectral sequence}

\begin{abstract}
We consider a sheaf of exterior algebras on a simplicial poset $S$
and introduce a notion of homological characteristic function. Two
objects are associated with these data: a graded sheaf
$\mathcal{I}$ and a graded cosheaf $\widehat{\Pi}$. When $S$ is a
homology manifold, we prove the isomorphism
$H^{n-1-p}(S;\mathcal{I})\cong H_{p}(S;\widehat{\Pi})$ which can
be considered as an extension of the Poincare duality. In general,
there is a spectral sequence $E^2_{p,q}\cong
H^{n-1-p}(S;\mathcal{U}_{n-1+q}\otimes \mathcal{I})\Rightarrow
H_{p+q}(S;\widehat{\Pi})$, where $\mathcal{U}_*$ is the local
homology stack on $S$. This spectral sequence, in turn, extends
Zeeman's spectral sequence in interpretation of McCrory. We apply
these results to toric topology. Let $X$ be an orientable manifold
with locally standard action of a compact torus and acyclic proper
faces of the orbit space. A principal torus bundle $Y$ is
associated with $X$ and the orbit type filtration on $X$ is
covered by a topological filtration on $Y$. Then the second pages
of homological spectral sequences associated with these two
filtrations are isomorphic in many positions.
\end{abstract}

\maketitle

%
%
%
%
%
%
%

\section{Introduction}\label{SecIntro}

An action of a compact torus $T^n$ on a smooth compact manifold
$M$ of dimension $2n$ is called locally standard if it is locally
modeled by the standard representation of $T^n$ on $\Co^n$. The
orbit space of a local chart is isomorphic to a nonnegative cone
$\Co^n/T^n\cong\{(x_1,\ldots,x_n)\in \Ro^n\mid x_i\geqslant 0\}$,
thus the orbit space $Q=M/T^n$ of the whole manifold has a natural
structure of manifold with corners. Points from interiors of
$k$-dimensional faces of $Q$ are the $k$-dimensional orbits of the
action. For any face $G$ of $Q$ consider the stabilizer subgroup
$T_G\subset T^n$ of points in the interior of $G$. The mapping
sending the face $G$ to the toric subgroup $T_G$ is called
characteristic data.

For any manifold $M$ with locally standard torus action having the
orbit space $Q$ there exists a principal $T^n$-bundle $Y\to Q$
such that $M$ is equivariantly homeomorphic to the identification
space $X=Y/\sim$, where $\sim$ identifies points over a face
$G\subset Q$ differing by the action of $T_G$ \cite{Yo}. Thus any
manifold with locally standard action is uniquely determined, up
to equivariant homeomorphism, by three objects: a manifold with
corners $Q$, a principal torus bundle $Y$ over $Q$ (these bundles
are encoded by their Euler classes lying in $H^2(Q;\Zo^n)$), and
characteristic data.

For a manifold with corners $Q$ consider the dual poset $S_Q$. The
elements of $S_Q$ are the faces of $Q$ and the order is given by
reversed inclusion. If $Q$ is the orbit space of a manifold with
locally standard action, then $S_Q$ is a simplicial poset (see
Definition~\ref{definSimpPoset}).

The description of topology of $X$ in terms of the combinatorial
data is difficult and, in general, far from being accomplished.
The cohomology and equivariant cohomology rings are unknown and
even Betti numbers haven't been explicitly calculated yet.

Nevertheless, there are several important particular cases which
are known and well studied. If the orbit space $Q$ is isomorphic
to a simple polytope, the manifold $X$ is called quasitoric. This
particular case was introduced and studied in the seminal work of
Davis and Januszkiewicz \cite{DJ} and underlied the development of
toric topology. Quasitoric manifolds are natural topological
generalizations of smooth projective toric varieties. The reason
which makes quasitoric manifolds feasible from topological
viewpoint is that the orbit space has trivial topology (the
convexity happens to be not so important).

This setting may be generalized to the case when all faces of $Q$
are acyclic. This situation is very close to toric varieties or
quasitoric manifolds and the answer is also very similar
\cite{MasPan}:
\[
H^*_{T^n}(X;\Zo)\cong \Zo[S_Q];\qquad H^*(X;\Zo)\cong
\Zo[S_Q]/(\theta_1,\ldots,\theta_n),
\]
where $\Zo[S_Q]$ is the face ring of the simplicial poset $S_Q$
and $(\theta_1,\ldots,\theta_n)$ is a regular sequence of degree
$2$ in $\Zo[S_Q]$, determined by the characteristic data.

There are several papers where the calculation of topological
invariants was performed for more general examples. In \cite{AMPZ}
we proved that whenever all proper faces of $Q$ are acyclic and
$Y\to Q$ is a trivial bundle, the equivariant cohomology ring is
represented as a direct sum (as rings and as modules over
$H^*(BT^n;\Zo)$):
\[
H^*_{T^n}(X;\Zo)\cong \Zo[S_Q]\oplus H^*(Q;\Zo).
\]
We also calculated Betti numbers and partly described the ring
structure of $H^*(X,\Zo)$ when $X$ is an orientable toric origami
manifold with acyclic proper faces of the orbit space. This is a
very restricted class of manifolds with locally standard actions,
but even in this case many interesting phenomena sprang up. Betti
numbers of 4-dimensional toric origami manifolds without any
restrictions on proper faces were calculated in \cite{HoPi}. The
cohomology rings of 4-dimensional manifolds whose orbit spaces are
polygons with holes were described in \cite{PodSar}.

In \cite{Yo} Yoshida introduced the cohomological spectral
sequence converging to $H^*(X;\Zo)$ for any $X$, but generally
this spectral sequence does not quickly collapse, so it is
difficult to extract any explicit information, such as Betti
numbers, from it. This approach requires an extra effort to obtain
a concrete result. However, for some particular choices of $X$
this extra effort can be done.

In this paper we study the homological structure of a manifold
$X$, and related objects, using the filtration of $X$ by orbit
types
\begin{equation}\label{eqIntroFiltrX}
X_0\subset X_1\subset\ldots\subset X_n=X.
\end{equation}
Here $X_i$ is the union of all $T^n$-orbits of dimension at most
$i$, so $\dim X_i = 2i$. This filtration induces a spectral
sequence $\E{X}^r_{p,q}\Rightarrow H_{p+q}(X)$, where
$\E{X}^1_{p,q}\cong H_{p+q}(X_p,X_{p-1})$. By dimensional reasons,
$\E{X}^r_{p,q}=0$ for $p<q$ and $r\geqslant 1$.

There is a natural topological filtration of $Y$ which covers the
orbit type filtration of $X$, and the map $f\colon Y\to X$ induces
the map of homological spectral sequences
\begin{equation}\label{eqIntroMapSpecSeq}
f^r_*\colon \E{Y}^r_{p,q}\to\E{X}^r_{p,q}.
\end{equation}
If the proper faces of $Q$ are acyclic, we prove that the map
$f^2_*$ is an isomorphism for $p>q$. Thus every entry of
$\E{X}^r_{p,q}$ away from the diagonal is known, at least if the
structure of $\E{Y}^2_{p,q}$ is known.


To prove the above-mentioned isomorphism (Theorem
\ref{thmToricSpecSec}), we place the maps $f^2_*\colon
\E{Y}^2_{p,q}\to\E{X}^2_{p,q}$ into a long exact sequence and show
that certain intermediate terms of this sequence vanish. These
intermediate terms are the cohomology modules $H^*(S_Q;\I)$ of a
graded sheaf $\I$ on $S_Q$, whose values are the ideals in the
homology algebra $H_*(T^n)$ generated by the vector subspaces
$H_1(T_G)\subset H_1(T^n)$. The vanishing of these sheaf
cohomology in certain degrees is the most nontrivial and essential
part of the work. It follows from the duality:
\begin{equation}\label{eqIntroDualityManif}
H^{n-1-i}(S_Q;\I)\cong H_{i}(S_Q;\Pih),
\end{equation}
(Theorem \ref{thmDuality}) which holds for the homology manifold
$S_Q$ and extends the Poincare duality $H^{n-1-i}(S_Q;\ko)\cong
H_{i}(S_Q;\ko)$. Here $\Pih$ is a cellular cosheaf on $S_Q$ whose
value on a face $G\subset Q$ is the ideal in $H_*(T^n)$ generated
by the volume form of the submodule $H_*(T_G)\subset H_*(T^n)$.

We study this duality in a broader and quite natural setting. For
a simplicial poset $S$ there exists a Zeeman--McCrory spectral
sequence $\E{ZM}^r_{p,q}$. It converges to the homology of $S$,
and its second page is the cohomology of local homology stacks
$\loc_*$ on $S$. If $S$ is a manifold, this sequence collapses at
a second page and gives a standard proof of the Poincare duality.
Thus Zeeman--McCrory spectral sequence can be roughly considered
as a generalization of Poincare duality to non-manifolds.

We prove that there is a spectral sequence, starting with
$H^*(S;\loc_*\otimes \I)$ and converging to $H_*(S;\Pih)$ (Theorem
\ref{thmSpecSec}). For homology manifolds it collapses to the
isomorphism \eqref{eqIntroDualityManif}.

The work is organized as follows. In Section \ref{SecSheaves} we
review the basic notions: simplicial posets, sheaves, cosheaves,
and Zeeman--McCrory spectral sequence. The word ``sheaf'' will
always mean ``a sheaf over a finite poset''. It is not used in its
broadest topological sense, but rather replaces the term stack or
local coefficient system. In Section \ref{SecAlgebras} we
introduce the notion of homological characteristic function,
define two objects associated with this object: the sheaf $\I$,
and the cosheaf $\Pih$, and formulate Theorems \ref{thmSpecSec}
and \ref{thmDuality}, proving the duality
\eqref{eqIntroDualityManif}. Theorem \ref{thmSpecSec} is proved in
Section \ref{SecProof12}, and Theorem \ref{thmDuality} follows as
its particular case. Preliminaries on manifolds with locally
standard actions are given in Section \ref{SecManifolds}. We
introduce topological filtrations on $Q$, $X$, and $Y$, and
formulate Theorem \ref{thmToricSpecSec}, which states that modules
$\E{X}^r_{p,q}$ are isomorphic to $\E{Y}^2_{p,q}$ for $p>q$.
Section \ref{SecProof3} is devoted to the proof of Theorem
\ref{thmToricSpecSec}; there we explain the connection of
manifolds with torus actions and the sheaf-theoretical part of the
work.

%
%
%
%
%
%
%

\section{Sheaves and cosheaves over simplicial posets}\label{SecSheaves}

%
%

\subsection{Preliminaries on simplicial posets}

\begin{defin}\label{definSimpPoset}
A finite partially ordered set (poset) is called simplicial if
there exists a minimal element $\minel\in S$ and, for any $I\in
S$, the lower order ideal $\{J\in S\mid J\leqslant I\}$ is
isomorphic to the boolean lattice $2^{[k]}$ (the poset of faces of
a $(k-1)$-dimensional simplex) for some $k\geqslant 0$.
\end{defin}

The elements of $S$ are called \emph{simplices}. The number $k$ in
the definition is denoted by $|I|$ and called the rank of a
simplex $I$. Also set $\dim I = |I|-1$. A simplex of rank $1$ is
called a \emph{vertex}; the set of all vertices is denoted by
$\ver(S)$. A subset $L\subset S$ closed under taking sub-simplices
is called a simplicial subposet.

The notation $I\less{i}J$ is used whenever $I\leqslant J$ and
$|J|-|I|=i$. If $S$ is a simplicial poset, then for each
$I\less{2}J\in S$, there exist exactly two simplices $J'\neq J''$
between $I$ and $J$:
\begin{equation}\label{eqSquareCond}
I\less{1} J',J'' \less{1} J.
\end{equation}
For simplicial poset $S$ a ``sign convention'' can be chosen. It
means that we can associate an incidence number $\inc{J}{I}=\pm 1$
with any pair $I\less{1}J\in S$ such that
\begin{equation}\label{eqSignSquare}
\inc{J}{J'}\cdot \inc{J'}{I}+\inc{J}{J''}\cdot \inc{J''}{I}=0
\end{equation}
for any combination \eqref{eqSquareCond}. The choice of sign
convention is the same as orienting each simplex in $S$. We fix an
arbitrary sign convention and use it in the following
considerations.

Notice that the set of simplices of any finite simplicial complex
obviously forms a simplicial poset. Thus the notion of simplicial
poset is a straightforward generalization of abstract simplicial
complex.

For $I\in S$ consider the following subset of $S$:
\[
\opst_SI=\{J\in S\mid J\geqslant I\},
\]
called the \emph{open star} of $I$. It is easily seen that
$S\setminus\opst_SI$ is a simplicial subposet of $S$.

We also define the \emph{link} of a simplex $I\in S$:
\[
\lk_SI=\{J\in S\mid J\geqslant I\}.
\]
This set inherits the order relation from $S$, and $\lk_SI$ is a
simplicial poset with respect to this order, with the minimal
element $I$. The reason why we used to different notation for the
same thing is that it is convenient to distinguish between
$\opst_SI$, which is considered as a subset of $S$ (but not a
subposet!), and $\lk_SI$, which is considered as a simplicial
poset on its own (and which is, in general, not included in $S$ as
a subposet in any meaningful way). Note that $\lk_S\minel=S$.

Let $S'$ be the barycentric subdivision of $S$. By definition,
$S'$ is a simplicial complex on the set $S\setminus \minel$ whose
simplices are the chains of elements of $S$. By definition, the
geometric realization of $S$ is the geometric realization of its
barycentric subdivision $|S|\eqd|S'|$. One can also think of $|S|$
as a CW-complex with simplicial cells. Such topological models of
simplicial posets were called \emph{simplicial cell complexes} and
were studied in \cite{BPposets}.

A poset $S$ is called \emph{pure} if all its maximal elements have
equal dimensions. A poset $S$ is pure whenever $S'$ is pure.

In the following $\ko$ denotes the ground ring; it may be either a
field or the ring of integers. The (co)homology of simplicial
poset $S$ mean the (co)homology of its geometrical realization
$|S|$. If the coefficient ring in the notation of (co)homology is
omitted, it is supposed to be $\ko$.

\begin{defin}\label{definBuchCMposets}
Simplicial complex $K$ of dimension $n-1$ is called Buchsbaum
(over $\ko$) if $\Hr_i(\lk_KI;\ko)=0$ for all $\minel\neq I\in K$
and $i\neq n-1-|I|$. If $K$ is Buchsbaum and, moreover,
$\Hr_i(K;\ko)=0$ for $i\neq n-1$ then $K$ is called
Cohen--Macaulay.

Simplicial poset $S$ is called Buchsbaum (resp. Cohen--Macaulay)
if $S'$ is a Buchsbaum (resp. Cohen--Macaulay) simplicial complex.
\end{defin}

\begin{rem}\label{remBuchPoset}
By \cite[Sec.6]{NS}, $S$ is Buchsbaum whenever
$\Hr_i(\lk_SI;\ko)=0$ for all $\minel\neq I\in S$ and $i\neq
n-1-|I|$. Similarly, $S$ is Cohen--Macaulay whenever
$\Hr_i(\lk_SI;\ko)=0$ for all $I\in S$ and $i\neq n-1-|I|$.
\end{rem}

Typical examples of Buchsbaum posets are triangulations (and, more
generally, simplicial cell decompositions) of manifolds. Typical
examples of Cohen--Macaulay posets are triangulations of spheres.
A poset $S$ is Buchsbaum whenever all its proper links are
Cohen--Macaulay.

One can easily check that whenever $S$ is Buchsbaum and connected,
then $S$ is pure. In the following only pure simplicial posets are
considered.

%
%

\subsection{Cellular sheaves}
Let $\MOD_{\ko}$ be the category of $\ko$-modules. The notation
$\dim V$ is used for the rank of a $\ko$-module $V$.

Each simplicial poset $S$ determines a small category $\cat(S)$
whose objects are the elements of $S$ and the morphisms are the
inequalities $I\leqslant J$. A \emph{cellular sheaf} \cite{Curry}
(or a stack \cite{McCrory}, or a local coefficient system
elsewhere) on $S$ is a covariant functor $\A\colon \cat(S)\to
\MOD_{\ko}$. We simply call $\A$ a sheaf on $S$ and hope this will
not lead to a confusion, since other meanings of this word do not
appear in the paper. The maps $\A(J_1\leqslant J_2)$ are called
\emph{restriction maps}. The cochain complex $(\Cc^*(S;\A),d)$ is
defined as follows:
\[
\Cc^*(S;\A) = \bigoplus_{i\geqslant -1}\Cc^i(S;\A),\qquad
\Cc^i(S;\A) = \bigoplus_{\dim I=i}\A(I),
\]
\[
d\colon\Cc^i(S;\A)\to \Cc^{i+1}(S;\A), \qquad
d=\bigoplus_{I\less{1}I', \dim I=i}[I':I]\A(I\leqslant I').
\]
The sign convention \eqref{eqSignSquare} implies that $d^2=0$.
Thus $(\Cc^*(S;\A),d)$ is a differential complex. Define the
cohomology of $\A$ as the cohomology of this complex:
\begin{equation}\label{eqSheafCohomDef}
H^*(S;\A)\eqd H^*(\Cc^*(S;\A),d).
\end{equation}

\begin{rem}
Cohomology of $\A$ defined this way coincide with any other
meaningful definition of cohomology. For example the derived
functors of the functor of global sections give the same groups as
\eqref{eqSheafCohomDef} (refer to \cite{Curry} for a broad
exposition of this subject).
\end{rem}

A sheaf $\A$ on $S$ can be restricted to a simplicial subposet
$L\subset S$. The complexes $(\Cc^*(L,\A),d)$ and
$(\Cc^*(S;\A)/\Cc^*(L;\A),d)$ are defined as usual. The latter
complex gives rise to a relative version of sheaf cohomology:
$H^*(S,L;\A)$.

\begin{rem}\label{remTruncateTheSheaf}
It is standard in topological literature to consider cellular
sheaves which do not take values on $\minel\in S$, since in
general this element does not have a geometrical meaning. However,
this extra value $\A(\minel)$ will be important in the
considerations of Section \ref{SecProof3}. Therefore the
cohomology group may be nontrivial in degree $-1=\dim\minel$. If a
sheaf $\A$ is defined on $S$, then we can consider its truncated
version $\underline{\A}$ which coincides with $\A$ on
$S\setminus\{\minel\}$ and vanishes on $\minel$.
\end{rem}

The notions of maps, (co)kernels, (co)images, tensor products of
sheaves over $S$ are defined in an obvious componentwise manner.
For example, if $\ca{A}$ and $\ca{B}$ are two sheaves on $S$, then
$\ca{A}\otimes\ca{B}$ is a sheaf on $S$ with values
$(\ca{A}\otimes\ca{B})(I)=\ca{A}(I)\otimes\ca{B}(I)$ and
restriction maps $(\ca{A}\otimes\ca{B})(I\leqslant
J)=\ca{A}(I\leqslant J)\otimes\ca{B}(I\leqslant J)$. In the realm
of finite simplicial posets the distinction between ``sheaves''
and ``presheaves'' vanishes, which makes things simpler than they
are in algebraic geometry.

\begin{ex}\label{exSheafConstant}
Let $W$ be a $\ko$-module. By abuse of notation let $W$ denote the
globally constant sheaf on $S$. It takes the constant value $W$ on
$I\neq \minel$ and vanishes on $\minel$. All nontrivial
restriction maps are identity isomorphisms. If $W$ is
torsion-free, we have $H^*(S;W)\cong H^*(S;\ko)\otimes W$ by the
universal coefficients formula.
\end{ex}

\begin{ex}\label{exLocConstSheaf}
A locally constant sheaf valued by $W\in\MOD_\ko$ is a sheaf
$\ca{W}$ which satisfies $\ca{W}(\minel)=0$, $\ca{W}(I)\cong W$
for $I\neq\minel$ and all nontrivial restriction maps are
isomorphisms (but may be not identity isomorphisms).
\end{ex}



\begin{ex}\label{exSheafLocHomol}
Following \cite{McCrory}, define $i$-th local homology sheaf
$\loc_i$ on $S$ by setting $\loc_i(\minel)=0$ and
\begin{equation}\label{eqDefLocHomSheaf}
\loc_i(J)=H_i(S,S\setminus\opst_SJ;\ko)
\end{equation}
for $J\neq \minel$. The restriction maps $\loc_i(J_1<J_2)$ are
induced by inclusions of subsets
$\opst_SJ_2\hookrightarrow\opst_SJ_1$. Standard topological
arguments imply that a simplicial poset $S$ is Buchsbaum if and
only if $\loc_i=0$ for $i\neq n-1$ (see also Remark
\ref{remLocalSheavesOnPoset} below).
\end{ex}

\begin{defin}
Buchsbaum simplicial poset $S$ is called homology manifold
(orientable over $\ko$) if its local homology sheaf $\loc_{n-1}$
is isomorphic to the constant sheaf~$\ko$.
\end{defin}

$S$ is an orientable homology manifold if and only if its
geometrical realization is an orientable homology manifold in a
usual topological sense.

%
%

\subsection{Cosheaves}

A cellular cosheaf \cite{Curry} is a contravariant functor
$\cah{A}\colon \cat(S)^{op}\to \MOD_{\ko}$. The homology of a
cosheaf are defined similar to the cohomology of a sheaf:
\begin{gather*}
\Cc_*(S;\cah{A}) = \bigoplus_{i\geqslant -1}\Cc_i(S;\cah{A})\quad
\Cc_i(S;\cah{A}) = \bigoplus_{\dim I=i}\A(I)\\
d\colon\Cc_i(S;\cah{A})\to \Cc_{i-1}(S;\cah{A}),\quad
d=\bigoplus_{I>_1I', \dim I=i}[I:I']\cah{A}(I\geqslant I'),\\
H_*(S;\cah{A})\eqd H_*(\Cc_*(S;\cah{A}),d).
\end{gather*}

\begin{ex}\label{exSheafCosheaf}
Each locally constant sheaf $\ca{W}$ on $S$ determines the locally
constant cosheaf $\cah{W}$ by inverting all maps, i.e.
$\cah{W}(I)\cong \ca{W}(I)$ and $\cah{W}(I>J)=(\ca{W}(J<I))^{-1}$.
\end{ex}

\begin{rem}
Notice that the notation $H_*(S;\ko)$ can mean either the homology
of the geometric realization $|S|$ or the homology of a globally
constant cosheaf $\ko$ on $S$. Obviously these two meanings are
consistent, and the same for cohomology of a constant sheaf.
\end{rem}

%
%

\subsection{Coskeleton filtration and dual faces}

In the following we suppose that $S$ is pure and $\dim S = n-1$.

\begin{con}
Let us recall the construction of \emph{coskeleton filtration} on
$|S|$. Consider the barycentric subdivision $S'$ of the pure
simplicial poset $S$. By definition, $S'$ is a simplicial complex
on the set $S\setminus \minel$ and $k$-simplices of $S'$ have the
form $(I_0< I_1<\ldots<I_k)$, where $I_i\in S\setminus\minel$. For
each $I\in S\setminus\{\minel\}$ consider the subcomplex of the
barycentric subdivision:
\[
G_I=\{(I_0< I_1<\ldots)\in S'\mbox{ such that } I_0\geqslant
I\}\subset S',
\]
and the subsets
\[
\dd G_I=\{(I_0< I_1<\ldots)\in S'\mbox{ such that } I_0>I\}
\subset S',\mbox{ and }\quad G_I^{\circ}=G_I\setminus\dd G_I.
\]

It is easily seen that $\dim G_I=n-1-\dim I$ since $S$ is pure. We
have $G_I\subset G_J$ whenever $J<I$. The complex $G_I$ (or its
geometrical realization $|G_I|$) is called the \emph{face} or the
\emph{pseudocell} of $|S|$ dual to $I\in S$. The boundary $\dd
G_I$ of a face $G_I$ is the union of some faces of smaller
dimensions.

Let $S_i=\bigcup_{\dim G_I\leqslant i}G_I$ for $-1\leqslant
i\leqslant n-1$. Thus $S_i$ is a simplicial subcomplex of $S'$.
The filtration
\begin{equation}\label{eqCoskeletonSimp}
\emptyset=S_{-1}\subset S_0\subset S_1\subset\ldots\subset
S_{n-1}= S',
\end{equation}
and the corresponding topological filtration
\begin{equation}\label{eqCoskeleton}
\emptyset=|S_{-1}|\subset |S_0|\subset |S_1|\subset\ldots\subset
|S_{n-1}|= |S|,
\end{equation}
are called the coskeleton filtrations of $S'$ and $|S|$
respectively \cite{McCrory}.
\end{con}

For a pair $I\less{1}J\in S$ consider the map:
\begin{multline}\label{eqMattachingMap}
m^q_{I,J}\colon H_{q+\dim G_I}(G_I,\dd G_I)\to H_{q+\dim
G_I-1}(\dd G_I)\to \\ \to H_{q+\dim G_I-1}(\dd G_I, \dd
G_I\setminus G_J^{\circ})\cong H_{q+\dim G_J}(G_J, \dd G_J),
\end{multline}
where the first map is the connecting homomorphism in the long
exact sequence of homology for the pair $(G_I,\dd G_I)$, and the
last isomorphism is due to excision. The homology spectral
sequence associated with filtration \eqref{eqCoskeleton} runs
\[
\E{S}^1_{p,q}=H_{p+q}(S_p,S_{p-1})\Rightarrow H_{p+q}(S).
\]
The first differential $\dif{S}^1$ is the sum of the maps
$m^q_{I,J}$ over all pairs $I\less{1}J$, $I,J\in S$.

\begin{con}\label{conSheafOnS}
Given a sign convention on $S$, for each $q$ consider the sheaf
$\hh_q$ on $S$ given by
\[
\hh_q(I)=H_{q+\dim G_I}(G_I,\dd G_I)
\]
for $I\neq\minel$, and $\hh_q(\minel)=0$. For neighboring
simplices $I\less{1}J$ define the restriction map as
$\hh_q(I\less{1}J) = \inc{J}{I} m^q_{I,J}$. For general
$I\less{k}J$ consider any saturated chain in $S$ between $I$ and
$J$:
\[
I\less{1}J_1\less{1}\ldots\less{1}J_{k-1}\less{1}J,
\]
and set
\[
\hh_q(I<J)\eqd\hh_q(J_{k-1}<J)\circ\ldots\circ\hh_q(I<J_1).
\]
\end{con}

\begin{lemma}
The map $\hh_q(I<J)$ thus defined does not depend on a choice of
saturated chain between $I$ and $J$.
\end{lemma}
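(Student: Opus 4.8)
The plan is to reduce the well-definedness question to the case of two saturated chains that differ in exactly one intermediate step, and then invoke the square condition \eqref{eqSquareCond} together with the sign relation \eqref{eqSignSquare}. First I would recall the standard fact that any two saturated chains between $I$ and $J$ in a simplicial poset can be connected by a finite sequence of elementary moves, where an elementary move replaces a fragment $I_{\ell-1}\less{1}I_\ell\less{1}I_{\ell+1}$ by $I_{\ell-1}\less{1}I_\ell'\less{1}I_{\ell+1}$, the two simplices $I_\ell\neq I_\ell'$ being precisely the two elements strictly between $I_{\ell-1}$ and $I_{\ell+1}$ guaranteed by \eqref{eqSquareCond}. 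Since the composition $\hh_q(I<J)$ factors through the composite over any saturated chain, and the parts of the chain before $I_{\ell-1}$ and after $I_{\ell+1}$ are untouched by an elementary move, it suffices to check that for $I'\less{2}J'$ in $S$ with the two intermediate simplices $K',K''$ one has
\[
\hh_q(K'<J')\circ\hh_q(I'<K') = \hh_q(K''<J')\circ\hh_q(I'<K'').
\]

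Next I would unwind both sides using the definition $\hh_q(A\less{1}B)=\inc{B}{A}\, m^q_{A,B}$. The left side is $\inc{J'}{K'}\inc{K'}{I'}\, m^q_{K',J'}\circ m^q_{I',K'}$ and the right side is $\inc{J'}{K''}\inc{K''}{I'}\, m^q_{K'',J'}\circ m^q_{I',K''}$. By the sign relation \eqref{eqSignSquare} applied to the square $I'\less{1}K',K''\less{1}J'$, the two scalar coefficients are negatives of each other, so the claim becomes
\[
m^q_{K',J'}\circ m^q_{I',K'} = -\, m^q_{K'',J'}\circ m^q_{I',K''}.
\]
This is now a purely topological identity about the dual faces $G_{I'}\supset G_{K'},G_{K''}\supset G_{J'}$ in the barycentric subdivision. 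The composite $m^q_{K',J'}\circ m^q_{I',K'}$ is, up to the excision isomorphisms, the double connecting homomorphism $H_{q+\dim G_{I'}}(G_{I'},\dd G_{I'})\to H_{q+\dim G_{J'}}(G_{J'},\dd G_{J'})$ routed through the intermediate stratum $G_{K'}$. I would identify this with the $d^2$-type composite in the coskeleton filtration spectral sequence, or more elementarily, observe that $G_{J'}^\circ$ appears in $\dd G_{I'}$ with $\dd G_{I'}\setminus G_{J'}^\circ$ deformation retracting appropriately, so both composites compute the same connecting map $H_*(G_{I'},\dd G_{I'})\to H_{*-2}(G_{J'},\dd G_{J'})$ of the triple, but traversed through the two distinct codimension-one strata $G_{K'}$, $G_{K''}$; the two routes differ by the usual sign coming from the anticommutativity of the two connecting homomorphisms attached to the pair of faces meeting along $G_{J'}$.

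The main obstacle, and the place where care is needed, is this last topological identity: making precise the claim that the two double-boundary maps through $G_{K'}$ and through $G_{K''}$ sum to zero. Concretely I expect to need a local model near the open face $G_{J'}^\circ$: a neighborhood of a point in $G_{J'}^\circ$ in $|S|$ looks like $G_{J'}^\circ$ times the cone on (a suspension-type join of) $\lk$ data, and within that model $G_{K'}$ and $G_{K''}$ are the two ``half-planes'' whose union is a neighborhood in $S_{\dim G_{K'}}$. The sign $-1$ then comes from orienting these two half-planes coherently as a single sphere/disk around $G_{J'}^\circ$, which is exactly the content of \eqref{eqSignSquare} expressed topologically. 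Once the local model is set up, the vanishing is the statement that a boundary map into $H_*(G_{J'},\dd G_{J'})$ kills the fundamental class of the separating sphere, i.e.\ the two incidence contributions cancel; alternatively one can cite that the square \eqref{eqSquareCond} of dual faces gives a commuting square of inclusions whose associated Mayer--Vietoris/connecting-map diagram anticommutes, which is the standard ``$d^2=0$'' phenomenon already used just above to conclude $d^2=0$ for $(\Cc^*(S;\A),d)$. I would present the argument by first citing the elementary-move reduction, then doing the sign bookkeeping via \eqref{eqSignSquare}, and finally proving the topological cancellation by passing to the local model around $G_{J'}^\circ$.
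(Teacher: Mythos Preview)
Your overall structure matches the paper's: reduce to chains differing by a single elementary flip (using that the interval $\{T\mid I\leqslant T\leqslant J\}$ is a boolean lattice), then verify the length-$2$ case via the sign relation \eqref{eqSignSquare} combined with the identity
\[
m^q_{K',J'}\circ m^q_{I',K'} + m^q_{K'',J'}\circ m^q_{I',K''} = 0.
\]
Where you diverge from the paper is in how you justify this last identity. You propose to prove it by hand via a local model near $G_{J'}^\circ$, tracking orientations of the two half-planes, etc. The paper instead observes that the maps $m^q_{A,B}$ are precisely the components of the first differential $\dif{S}^1$ of the coskeleton-filtration spectral sequence, so the identity above is nothing but $(\dif{S}^1)^2=0$, which holds automatically for any filtration spectral sequence. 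Since between $I'$ and $J'$ there are exactly the two intermediate simplices $K',K''$, the vanishing of $(\dif{S}^1)^2$ on this square gives exactly the two-term cancellation you need, with no further topological work.

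So your plan is correct but does more labor than necessary; the ``main obstacle'' you flag is not an obstacle at all once you recognize the identity as $(\dif{S}^1)^2=0$. (Note also a small terminological slip: this is not a ``$d^2$-type composite'' in the sense of a higher differential, but the square of the $E^1$-differential; and it is not the same $d^2=0$ as the one for $(\Cc^*(S;\A),d)$, which relies on the sign convention rather than on spectral-sequence generalities.)
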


\begin{proof}
The differential $\dif{S}^1$ satisfies $(\dif{S}^1)^2=0$, thus
$m^q_{J',J}\circ m^q_{I,J'}+m^q_{J'',J}\circ m^q_{I,J''}=0$. By
combining this with \eqref{eqSignSquare} we see that $\hh_q(I<J)$
is independent of the chain if its length is $2$. In general,
since $\{T\mid I\leqslant T\leqslant J\}$ is a boolean lattice,
any two saturated chains between $I$ and $J$ are connected by a
sequence of elementary flips $[J_k~\less{1}~T_1~\less{1}~J_{k+2}]
\rightsquigarrow [J_k~\less{1}~T_2~\less{1}~J_{k+2}]$ and the
statement follows.
\end{proof}

Thus the sheaves $\hh_q$ are well defined. They will be called the
structure sheaves of $S$. From the definition of a cochain complex
directly follows

\begin{cor}
The cochain complexes of structure sheaves coincide with
$\E{S}^1_{*,*}$ up to change of indices:
\[
(\E{S}^1_{*,q},\dif{S}^1)\cong(C^{n-1-*}(\hh_q),d).
\]
\end{cor}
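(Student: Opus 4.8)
The plan is purely to unwind both sides of the asserted isomorphism and match them block by block; this is what ``directly follows from the definition of a cochain complex'' amounts to here.

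\textbf{Step 1 (the modules).} First I would record the combinatorial splitting coming from the coskeleton filtration. Every simplex $(I_0<I_1<\ldots)$ of $S'$ lies in $G_{I_0}^{\circ}$ and in no other $G_I^{\circ}$, and by purity $\dim G_{I_0}=n-1-\dim I_0$; hence such a simplex lies in $S_p\setminus S_{p-1}$ precisely when $\dim I_0=n-1-p$, in which case it lies in $G_{I_0}$ with $\dim G_{I_0}=p$. Therefore the relative simplicial chain complex splits as a direct sum of chain complexes
\[
C_*(S_p)/C_*(S_{p-1})\;=\;\bigoplus_{\dim I=n-1-p}\bigl(C_*(G_I)/C_*(\dd G_I)\bigr),
\]
and passing to homology, and using $p+q=q+\dim G_I$ when $\dim G_I=p$, gives
\[
\E{S}^1_{p,q}=H_{p+q}(S_p,S_{p-1})\;\cong\;\bigoplus_{\dim I=n-1-p}H_{q+\dim G_I}(G_I,\dd G_I)\;=\;\bigoplus_{\dim I=n-1-p}\hh_q(I)\;=\;C^{n-1-p}(\hh_q).
\]
This is exactly the claimed identification of graded modules, under the index change $p\leftrightarrow n-1-p$; the extreme value $\dim I=-1$ is harmless, since $\hh_q(\minel)=0$ and $C^{-1}(\hh_q)=0$ as well.

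\textbf{Step 2 (the differentials).} Next I would check that the isomorphism of Step 1 carries $\dif{S}^1$ to $d$ block by block. Fix $I\less{1}J$ with $\dim I=n-1-p$, so $\dim J=n-p$ and $\dim G_J=p-1$. By definition the first differential of the spectral sequence is the connecting homomorphism of the pair $(S_p,S_{p-1})$ followed by the projection to $(S_{p-1},S_{p-2})$; reading this through the splitting above, its $(I,J)$-block is exactly the composite \eqref{eqMattachingMap}, i.e.\ $m^q_{I,J}$ --- this is precisely the statement, recalled in the text, that $\dif{S}^1$ is the sum of the $m^q_{I,J}$ over pairs $I\less{1}J$. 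On the other side, by the formula for the cochain differential $d$ together with the definition of the restriction maps of $\hh_q$ in Construction \ref{conSheafOnS}, the $(I,J)$-block of $d\colon C^{n-1-p}(\hh_q)\to C^{n-p}(\hh_q)$ is $[J:I]\,\hh_q(I\less{1}J)=[J:I]\cdot[J:I]\,m^q_{I,J}=m^q_{I,J}$, since $[J:I]=\pm1$. Hence the two differentials coincide block by block, the corollary follows, and in passing the identities $(\dif{S}^1)^2=0$ and $d^2=0$ transport into one another.

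\textbf{Expected obstacle.} There is essentially none: the only inputs beyond bookkeeping are the simplicial splitting in Step 1, which is immediate from the construction of the pseudocells (each simplex of $S'$ has a well-defined smallest vertex $I_0$, and $G_I^{\circ}$ collects the simplices whose smallest vertex equals $I$), and the quoted description of $\dif{S}^1$, which is the standard form of the first differential of the spectral sequence of a filtered chain complex. The one point that demands care is the sign: the incidence number $[J:I]$ occurs once in $d$ and once inside $\hh_q(I\less{1}J)$, and $[J:I]^2=1$, so it cancels and the matching is on the nose --- mirroring the cancellation that already made the sheaves $\hh_q$ well defined in the preceding lemma.
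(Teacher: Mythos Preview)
Your proposal is correct and is precisely the unwinding the paper has in mind when it says the corollary ``directly follows from the definition of a cochain complex'': you identify the modules via the pseudocell splitting and then observe that the double occurrence of $[J:I]$ cancels so that the block of $d$ equals $m^q_{I,J}$, matching $\dif{S}^1$. There is no alternative approach here to compare; you have simply written out the details the paper suppresses.
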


\begin{rem}\label{remLocalSheavesOnPoset}
There exists an isomorphism of sheaves
\begin{equation}\label{eqLocStrIsom}
\hh_q\cong \loc_{q+n-1},
\end{equation}
where $\loc_*$ are the sheaves of local homology defined in
Example \ref{exSheafLocHomol}. Indeed, it can be shown that
$H_i(S,S\setminus\opst_SI)\cong H_{i-\dim I}(G_I,\dd G_I)$ and
these isomorphisms can be chosen compatible with restriction maps.
For simplicial complexes this fact is proved in
\cite[Sec.6.1]{McCrory}; the case of simplicial posets is rather
similar. Note that the definition of $\hh_*$ depends on the sign
convention while $\loc_*$ does not. This makes no contradiction
since the isomorphism \eqref{eqLocStrIsom} itself depends on the
choice of orientations.

The isomorphism \eqref{eqLocStrIsom} implies that $S$ is Buchsbaum
if and only if $\hh_q=0$ for $q\neq 0$. Simplicial poset $S$ is an
orientable manifold if it is Buchsbaum and, moreover,
$\hh_0\cong\ko$.
\end{rem}

%
%

\subsection{Zeeman--McCrory spectral sequence}

From the considerations of the previous subsection easily follows

\begin{stm}[McCrory, \cite{McCrory}]
There exists a spectral sequence, located in fourth quadrant,
\begin{gather}\label{eqZMspecSeq}
\E{ZM}^r_{p,q},\quad d^r\colon \E{ZM}^r_{p,q}\to \E{ZM}^r_{p-r,q+r-1};\\
\E{ZM}^2_{p,q}\cong H^{n-1-p}(S;\loc_{n-1+q}) \Rightarrow
H_{p+q}(S;\ko).
\end{gather}
It is isomorphic to the homological spectral sequence, associated
with the coskeleton filtration of $|S|$.
\end{stm}

For us, however, it will be more convenient to work with structure
sheaves $\hh_*$ rather than local homology sheaves $\loc_*$. For a
Buchsbaum simplicial poset the sheaf $\hh_i$ vanish for $i\neq 0$.
Thus $\E{ZM}^2_{p,q}=0$ for $q\neq 0$ and the spectral sequence
collapses at the second page inducing the isomorphism
\[
H^{n-1-p}(S;\hh_0)\cong H_p(S;\ko).
\]
When $S$ is an orientable homology manifold, this gives a Poincare
duality isomorphism
\[
H^{n-1-p}(S;\ko)\cong H_p(S;\ko).
\]

%
%

\subsection{Corefinements of sheaves}\label{SubsecCorefin}

In this section we develop a technical notion which will be used
further in the proofs.

Let $\ca{A}$ be a sheaf on $S$. Define a cosheaf $\prh{A}$ on the
barycentric subdivision $S'$ by
\[
\prh{A}(I_1<\ldots<I_k)=\ca{A}(I_1)
\]
with corestriction maps determined naturally by restriction maps
of $\ca{A}$:
\[
\prh{A}((I_1<\ldots<I_k)\supset(J_1<\ldots<J_s))=\ca{A}(I_1\leqslant
J_1).
\]
We call $\prh{A}$ a corefinement of a sheaf $\ca{A}$. The faces
$G_I$ and their boundaries $\dd G_I$ are the simplicial
subcomplexes of $S'$ so one can restrict $\prh{A}$ to them. Next
lemma follows easily from the definitions.

\begin{lemma}\label{lemmaTermSplitGenSheaf}
\[
H_q(S_p,S_{p-1},\prh{A})\cong \bigoplus_{I, \dim G_I=p}
H_q(G_I,\dd G_I; \prh{A}).
\]
\end{lemma}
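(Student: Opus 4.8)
The plan is to reduce both sides to the same direct sum by relativizing along the coskeleton filtration. Recall that $S_p = \bigcup_{\dim G_I \leqslant p} G_I$ is a simplicial subcomplex of $S'$, and $S_p / S_{p-1}$ is, cellularly, a wedge-like gluing of the pairs $(G_I, \dd G_I)$ over all $I$ with $\dim G_I = p$. Since $\dim G_I = n-1-\dim I$ for a pure poset, the index set $\{I : \dim G_I = p\}$ is exactly $\{I : \dim I = n-1-p\}$. So the statement is really the claim that relative homology of a pair of subcomplexes, with coefficients in the cosheaf $\prh{A}$, splits as a sum over the top-dimensional cells of the quotient.

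First I would make precise the decomposition of $S_p$ as a union of the subcomplexes $G_I$ ($\dim G_I = p$) glued along pieces of $S_{p-1}$. Concretely, $S_p = S_{p-1} \cup \bigcup_{\dim G_I = p} G_I$, and for distinct $I, I'$ with $\dim G_I = \dim G_{I'} = p$ one has $G_I \cap G_{I'} \subseteq S_{p-1}$ (two distinct dual faces of the same dimension meet only in lower-dimensional dual faces), while $G_I \cap S_{p-1} = \dd G_I$. This is a routine combinatorial fact about the barycentric subdivision: a simplex $(J_0 < J_1 < \cdots)$ of $S'$ lies in $G_I$ iff $J_0 \geqslant I$, and it lies in $\dd G_I$ iff $J_0 > I$, so the ``interior'' simplices $G_I^\circ$ for distinct top $I$ are disjoint and their union with $S_{p-1}$ is all of $S_p$.

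Next I would invoke excision for cosheaf homology on simplicial complexes — or, more elementarily, just observe directly on the level of the relative chain complex that
\[
\Cc_*(S_p, S_{p-1}; \prh{A}) = \bigoplus_{\dim G_I = p} \Cc_*(G_I, \dd G_I; \prh{A}),
\]
since the relative chains in degree $j$ are $\bigoplus_{\dim \sigma = j,\ \sigma \in S_p \setminus S_{p-1}} \prh{A}(\sigma)$, and each such $\sigma$ belongs to a unique $G_I^\circ$ with $\dim G_I = p$; moreover the differential respects this splitting because a face of a simplex in $G_I^\circ$ is either again in $G_I^\circ$ or in $\dd G_I \subseteq S_{p-1}$, hence killed in the quotient. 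Taking homology of a direct sum of chain complexes commutes with the direct sum, which yields the asserted isomorphism. The only mild subtlety — and the point I would want to state carefully rather than the ``hard part'' — is the bookkeeping that the relative complex $\Cc_*(G_I,\dd G_I;\prh{A})$ computed inside $S'$ agrees with the one computed intrinsically, which is immediate because $\prh{A}$ restricted to the subcomplex $G_I$ is by definition the restriction of the cosheaf, and $\dd G_I$ is a full subcomplex. Since the statement is explicitly flagged in the excerpt as following ``easily from the definitions,'' I expect no genuine obstacle; the entire content is the disjointness of the open dual cells $G_I^\circ$ of a fixed dimension and the naturality of the corefinement construction with respect to restriction to subcomplexes.
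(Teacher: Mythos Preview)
Your proof is correct and is exactly the unpacking of what the paper means by ``follows easily from the definitions'': the paper gives no further argument, and the content is precisely the observation that the simplices of $S_p\setminus S_{p-1}$ are partitioned by the open dual cells $G_I^\circ$ with $\dim G_I=p$, and that the relative boundary respects this partition. Your verification that a face of a chain $(I_0<\cdots<I_k)\in G_I^\circ$ either stays in $G_I^\circ$ or falls into $\dd G_I\subset S_{p-1}$ is the whole point, and nothing more is needed.
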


Similar to \eqref{eqMattachingMap} there is a map
\begin{multline}\label{eqMattachingMapA}
m^{q,\ca{A}}_{I,J}\colon H_{q+\dim G_I}(G_I,\dd G_I;\prh{A})\to
H_{q+\dim G_I-1}(\dd G_I;\prh{A})\to \\ \to H_{q+\dim G_I-1}(\dd
G_I, \dd G_I\setminus G_J^{\circ};\prh{A})\cong H_{q+\dim
G_J}(G_J, \dd G_J;\prh{A}).
\end{multline}
These maps allow to define new sheaves $\new{A}_q$ on $S$ by
setting $\new{A}_q(I)=H_{q+\dim G_I}(G_I,\dd G_I;\prh{A})$ with
restriction maps defined similar to Construction
\ref{conSheafOnS}.

\begin{lemma}\label{lemmaNewSheafIsOld}
If $A(I)$ is torsion-free for all $I\in S$, then there exist
natural isomorphisms
\[
H_r(G_I,\dd G_I; \prh{A})\cong H_r(G_I,\dd G_I;\ko)\otimes
\ca{A}(I).
\]
The maps $m^{q,\ca{A}}_{I,J}$ coincide with
$m^{q}_{I,J}\otimes\ca{A}(I<J)$ up to these isomorphisms. Thus the
sheaf $\new{A}_q$ is isomorphic to $\hh_q\otimes\ca{A}$.
\end{lemma}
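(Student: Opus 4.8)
The plan is to establish the two displayed claims in order: first the natural isomorphism $H_r(G_I,\dd G_I;\prh{A})\cong H_r(G_I,\dd G_I;\ko)\otimes\ca{A}(I)$, then the compatibility of the attaching maps, from which the sheaf isomorphism $\new{A}_q\cong\hh_q\otimes\ca{A}$ is immediate by comparing Construction~\ref{conSheafOnS} with the definition of $\new{A}_q$.

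For the first isomorphism, the key observation is that the cosheaf $\prh{A}$ restricted to the subcomplex $G_I$ is a \emph{locally constant} cosheaf with value $\ca{A}(I)$. Indeed, every simplex of $G_I$ is a chain $(I_0<I_1<\ldots)$ with $I_0\geqslant I$, so $\prh{A}(I_0<I_1<\ldots)=\ca{A}(I_0)$; but wait — this is $\ca{A}(I_0)$, not $\ca{A}(I)$ in general. The correct statement is subtler: $G_I$ is the cone with apex the vertex $\{I\}$ of $S'$ (the chain of length one) over $\dd G_I$, and on the apex the value is $\ca{A}(I)$. So I would instead argue directly on the chain level. The chain complex $\Cc_*(G_I;\prh{A})$ has, in each degree, a direct sum of terms $\ca{A}(I_0)$ over chains starting at various $I_0\geqslant I$, with corestriction maps built from the $\ca{A}(I_0\leqslant J_0)$. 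Since $\ca{A}(I)$ is torsion-free, tensoring the integral (i.e.\ $\ko$-coefficient) chain complex of the pair $(G_I,\dd G_I)$ with $\ca{A}(I)$ is exact, so it suffices to produce a chain homotopy equivalence between $\Cc_*(G_I,\dd G_I;\prh{A})$ and $\Cc_*(G_I,\dd G_I;\ko)\otimes\ca{A}(I)$. The map from the latter to the former is induced on each generator $(I\leqslant I_0<I_1<\ldots)$ by the restriction $\ca{A}(I\leqslant I_0)\colon\ca{A}(I)\to\ca{A}(I_0)$; one checks it is a chain map using functoriality of $\ca{A}$ and commutes with the inclusion of $\dd G_I$. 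To see it is a quasi-isomorphism, I would use the cone structure: $G_I$ deformation retracts onto its apex $\{I\}$, and both chain complexes compute the (relative) homology of a cone, so everything is concentrated — more precisely, I can filter $G_I$ by the simplicial subcomplexes $G_J$ for $J\geqslant I$ (the coskeleton filtration of $\lk_S I$, shifted) and run an induction on $\dim I$ downward, or simply invoke that a corefinement of a locally constant sheaf on a contractible-fiber situation recovers the tensor product, which is the standard local-coefficients statement. The cleanest route: the subcomplex $G_I$ is the star of the vertex $\{I\}\in S'$, hence collapsible to $\{I\}$, and $\prh{A}|_{G_I}$ has the property that all corestriction maps \emph{into} the apex value are... — rather, I will show by an explicit acyclic-carrier / simplicial-collapse argument that the augmentation-type map $\Cc_*(G_I;\prh{A})\to\ca{A}(I)$ (sending a chain starting at $I_0$ via a zigzag back to $\ca{A}(I)$) is a quasi-isomorphism, which combined with the analogous statement for $\ko$ gives the relative claim by the long exact sequences and the five lemma.

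For the compatibility of attaching maps, once the natural isomorphism $H_r(G_I,\dd G_I;\prh{A})\cong H_r(G_I,\dd G_I;\ko)\otimes\ca{A}(I)$ is in hand, I would trace through the three maps composing $m^{q,\ca{A}}_{I,J}$ in \eqref{eqMattachingMapA}: the connecting homomorphism of the pair $(G_I,\dd G_I)$ is natural in the coefficient system and under the tensor-product identification becomes $\partial\otimes\id_{\ca{A}(I)}$; the map induced by the inclusion of pairs $(\dd G_I,\emptyset)\to(\dd G_I,\dd G_I\setminus G_J^{\circ})$ is likewise natural; and the excision isomorphism $H_{*}(\dd G_I,\dd G_I\setminus G_J^{\circ};\prh{A})\cong H_*(G_J,\dd G_J;\prh{A})$ — here is where the coefficient system genuinely changes from $\ca{A}(I)$ to $\ca{A}(J)$. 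The excised region $G_J^{\circ}$ sits inside $\dd G_I$, and on a neighborhood of it in $\dd G_I$ the cosheaf $\prh{A}$ has corestriction maps governed by $\ca{A}(I\leqslant\cdot)$, whereas after excision we read it on $G_J$ where the apex value is $\ca{A}(J)$; the comparison of the two identifications with tensor products differs precisely by $\ca{A}(I\leqslant J)$. So the composite is $m^q_{I,J}\otimes\ca{A}(I\leqslant J)$ as claimed. Unwinding the definitions of $\hh_q(I\less{1}J)=\inc{J}{I}m^q_{I,J}$ and $\new{A}_q(I\less{1}J)=\inc{J}{I}m^{q,\ca{A}}_{I,J}$, and then extending multiplicatively along saturated chains exactly as in Construction~\ref{conSheafOnS}, yields $\new{A}_q(I<J)=\hh_q(I<J)\otimes\ca{A}(I<J)$, i.e.\ $\new{A}_q\cong\hh_q\otimes\ca{A}$ as sheaves on $S$.

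\textbf{Main obstacle.} The routine part is naturality of connecting homomorphisms and excision in the coefficient system. The genuine content — and the step I expect to be most delicate — is the first isomorphism together with the bookkeeping of \emph{which} identification with a tensor product one uses at each stage of \eqref{eqMattachingMapA}: the isomorphisms $H_r(G_I,\dd G_I;\prh{A})\cong H_r(G_I,\dd G_I;\ko)\otimes\ca{A}(I)$ and $H_r(G_J,\dd G_J;\prh{A})\cong H_r(G_J,\dd G_J;\ko)\otimes\ca{A}(J)$ are built relative to different apices, and the excision isomorphism is the place where the discrepancy $\ca{A}(I\leqslant J)$ must be extracted. Getting the signs and the direction of this comparison exactly right, and checking it is compatible along saturated chains so that the final answer is a genuine sheaf isomorphism and not merely a levelwise one, is where care is needed; the torsion-freeness hypothesis is used exactly to guarantee the Künneth/universal-coefficients identification is an isomorphism in the first place.
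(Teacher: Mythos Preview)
You are overcomplicating the first isomorphism and, in doing so, miss the one-line observation that the paper uses. Look again at the \emph{relative} chain complex $C_*(G_I,\dd G_I;\prh{A})$: its generators are exactly the simplices of $G_I^{\circ}=G_I\setminus\dd G_I$, and by definition these are chains $(I_0<I_1<\ldots)$ with $I_0\geqslant I$ but $I_0\not> I$, i.e.\ $I_0=I$. Hence on every such simplex $\prh{A}$ takes the value $\ca{A}(I)$, and all corestriction maps between simplices of $G_I^{\circ}$ are $\ca{A}(I\leqslant I)=\id$. So $C_*(G_I,\dd G_I;\prh{A})$ is \emph{literally equal} to $C_*(G_I,\dd G_I;\ca{A}(I))$, the relative chain complex with constant coefficients $\ca{A}(I)$; no chain homotopy equivalence, no collapse argument, no five lemma is needed. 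Then the universal coefficients theorem (using that $\ca{A}(I)$ is torsion-free) gives $H_r(G_I,\dd G_I;\ca{A}(I))\cong H_r(G_I,\dd G_I;\ko)\otimes\ca{A}(I)$.

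Your detour through absolute homology and the five lemma would actually fail: on $\dd G_I$ the cosheaf $\prh{A}$ takes varying values $\ca{A}(I_0)$ for $I_0>I$ with restriction maps that need not be isomorphisms (the lemma is applied in the paper to $\ca{A}=\I$, which is far from locally constant), so there is no reason for $H_*(\dd G_I;\prh{A})$ to be $H_*(\dd G_I;\ko)\otimes\ca{A}(I)$. Your analysis of the attaching maps $m^{q,\ca{A}}_{I,J}$ is on the right track --- indeed, once you know the relative complexes are constant-coefficient, the connecting map followed by projection to $G_J^{\circ}$ picks out precisely those boundary faces with $I_1=J$, and the corestriction there is $\ca{A}(I\leqslant J)$ --- but this part becomes immediate once the first step is done correctly.
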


\begin{proof}
By the definition of $\prh{A}$ we have
\[
H_r(G_I,\dd G_I; \prh{A})\cong H_r(G_I,\dd G_I; \ca{A}(I)),
\]
since the value of $\prh{A}$ on all simplices of $G_I^{\circ}$ is
exactly $\ca{A}(I)$. The rest follows from universal coefficients
formula.
\end{proof}

%
%
%
%
%
%
%

\section{Exterior algebras and characteristic
functions}\label{SecAlgebras}

Let $V$ be a free $\ko$-module of dimension $\ld$. Let
$\Lambda[V]$ denote the free exterior algebra generated by $V$,
that is the quotient of a free tensor algebra $T[V]$ by the
relations $v\otimes v=0$ for all $v\in V$. The algebra
$\Lambda[V]$ is graded by degrees of exterior forms.

\begin{defin}
Let us fix a simplicial poset $S$ and a locally constant sheaf
$\ca{V}$ on $S$. A collection of vectors $\{\omega_i\in
\ca{V}(i)\mid i\in \ver(S)\}$ is called a homological
$\ko$-characteristic function if it satisfies the following
$\sta{\ko}$-condition:

For each simplex $I\in S\setminus\minel$ whose vertices are
$i_1,\ldots,i_k$, the vectors
\[
\ca{V}(i_1\leqslant I)(\omega_{i_1}), \ldots, \ca{V}(i_k\leqslant
I)(\omega_{i_k}) \in \ca{V}(I)
\]
are linearly independent over $\ko$ and span a direct summand in
$\ca{V}(I)$.
\end{defin}

For a locally constant sheaf $\ca{V}$ on $S$, valued by the vector
space $V$, consider the sheaf $\La=\Lambda[\ca{V}]$ of graded
exterior algebras generated $\ca{V}$. This means that
$\La(I)=\Lambda[\ca{V}(I)]$, and $\La(I\leqslant J)$ is an
isomorphism of graded exterior algebras generated by the
isomorphism $\ca{V}(I\leqslant J)\colon \ca{V}(I)\to \ca{V}(J)$ in
degree one levels. Let $\Lah$ denote the locally constant cosheaf
of exterior algebras corresponding to a sheaf $\La$ (see
Example~\ref{exSheafCosheaf}).

Let $\{\omega_i\in \ca{V}(i)\mid i\in\ver(S)\}$ be a homological
characteristic function. If $i$ is a vertex of a simplex $I$, then
the restriction map $\ca{V}(i\leqslant I)$ sends the vector
$\omega_i\in \ca{V}(i)$ to some vector in $\ca{V}(I)$. By abuse of
notation we denote the target vector by the same letter
$\omega_i$. So far the definition of homological characteristic
function implies that the set
$\{\omega_{i_1},\ldots,\omega_{i_k}\}$ freely spans a direct
summand of $\ca{V}(I)$ whenever $i_1,\ldots,i_k$ are vertices of
$I$. Note, that $\La(I)$ is an exterior algebra generated by
$\ca{V}(I)$, so the vectors $\omega_i$ can be considered as linear
forms in $\La(I)$.

\begin{con}
Consider a subsheaf $\I\subset\La$, defined as follows. For a
simplex $I$ with vertices $i_1,\ldots,i_k$ we set the value of
$\I$ on $I$ to be the ideal of $\La(I)$, generated by the linear
forms:
\[
\I(I) = (\omega_{i_1}, \ldots, \omega_{i_k}).
\]
It is easily seen that whenever $I\leqslant J$, the restriction
map $\La(I\leqslant J)$ sends the ideal $\I(I)$ generated by the
smaller set of elements into the ideal $\I(J)$ generated by the
larger set of elements. Thus the restriction maps of the sheaf
$\I$ are induced from those of $\La$ and are well defined.
\end{con}

\begin{con}
Let us define another type of ideals associated with a
characteristic function.

Let $J=\{i_1,\ldots,i_k\}$ be a nonempty subset of vertices of a
simplex $I\in S$. Consider the element $\pi_J\in\La(I)=\Lah(I)$,
$\pi_J=\bigwedge_{i\in J}\omega_i$. By the definition of
characteristic function, the elements $\{\omega_i\mid i\in J\}$
are linearly independent, thus $\pi_J$ is a non-zero form of
degree $|J|$. Let $\Pi_J\subset \La(I)$ be the principal ideal
generated by $\pi_J$. The restriction maps $\La(I<I')$ (and
corestriction maps $\Lah(I'>I)=\La(I<I')^{-1}$) identify
$\Pi_J\subset \La(I)$ with $\Pi_J\subset \La(I')$.

Let us define a subcosheaf $\Pih$ of ideals in $\Lah$. If $J$ is
the whole set of vertices of a simplex $I\neq\minel$ we define
$\Pih(I)\eqd\Pi_J\subset\Lah(I)$. If $I'<I$, the corestriction map
$\Lah(I'>I)$ injects $\Pih(I')$ into $\Pih(I)$, since the form
$\pi_{I'}$ is divisible by $\pi_{I}$. Thus $\Pih$ is a
well-defined graded sub-cosheaf of $\Lah$. We formally set
$\Pih(\minel)=0$.
\end{con}

Now we can formulate our main homological results.

\begin{thm}\label{thmSpecSec}
Let $S$ be a pure simplicial poset of dimension $n-1$, and $\I$,
$\Pih$ the sheaf and cosheaf over $S$, determined by some
homological $\ko$-characteristic function. Then there exists a
spectral sequence
\[
E^2_{s,k}\cong H^{n-1-s}(S;\hh_k\otimes \I)\Rightarrow
H_{s+k}(S;\Pih),
\]
\[
d^r\colon E^r_{s,k}\to E^r_{s-r,k+r-1}
\]
which respects the inner gradings of $\I$ and $\Pih$.
\end{thm}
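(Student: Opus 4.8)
The plan is to realize both sides of the claimed spectral sequence as the homology (resp. cohomology) of a single bicomplex built from the corefinement machinery of Section~\ref{SubsecCorefin}, and then compare the two spectral sequences of that bicomplex. Concretely, I would work on the barycentric subdivision $S'$ with the coskeleton filtration \eqref{eqCoskeletonSimp}, and form the corefinement $\prh{\I}$ of the sheaf $\I$. The key observation is that the cosheaf $\Pih$ on $S$ and the sheaf $\I$ on $S$ are linked by a short (in fact, exact) \emph{Koszul-type} resolution inside $\Lah$: for a simplex $I$ with vertex set $\{i_1,\dots,i_k\}$, the quotient $\La(I)/\I(I)$ is the exterior algebra on the complement of the span of $\omega_{i_1},\dots,\omega_{i_k}$, and the top exterior form $\pi_I$ of that span generates the annihilator ideal $\Pih(I)$. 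Thus, componentwise, $\I(I)$, $\La(I)$, and $\Pih(I)$ sit in a canonical way, and one gets a chain of sheaf/cosheaf maps whose failure of exactness is measured precisely by the local homology sheaves $\hh_*$. Making this precise at the level of the whole poset — so that the maps are compatible with all restriction and corestriction maps, using the fixed sign convention \eqref{eqSignSquare} — is the technical heart of the construction.

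The next step is to assemble the double complex. On one hand, filtering $|S|$ by \eqref{eqCoskeleton} and taking the associated graded with coefficients in $\prh{\I}$ gives, by Lemma~\ref{lemmaTermSplitGenSheaf} and Lemma~\ref{lemmaNewSheafIsOld}, a spectral sequence whose $E^1$-page is $\bigoplus_{\dim G_I = p} H_{p+q}(G_I,\dd G_I;\prh{\I}) \cong C^{n-1-p}(S;\hh_q\otimes\I)$, with $d^1$ the sheaf differential; hence its $E^2$-page is $H^{n-1-p}(S;\hh_q\otimes\I)$. This is the left-hand side of the theorem. On the other hand, I want the total complex of this same bicomplex to compute $H_*(S;\Pih)$. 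To get there I would introduce, alongside $\prh{\I}$, the corresponding corefined object for $\Lah$ and for $\Pih$, and use the Koszul relation above to produce a quasi-isomorphism (or at least a map inducing an isomorphism on the relevant pages) between $\Tot$ of the $\prh{\I}$-bicomplex and the chain complex $C_*(S;\Pih)$. The point is that the extra homological degree $k$ in $\hh_k\otimes\I$ is exactly the homological shift produced by passing from the ideal $\I$ to the ``top-form'' cosheaf $\Pih$ through the local homology of the links. The inner grading of $\I$ and $\Pih$ (the exterior-form degree) is preserved throughout because all maps in sight are degree-preserving maps of graded exterior algebras, so the spectral sequence respects it.

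The main obstacle, I expect, is precisely the second half: establishing that the total complex of the $\hh_*\otimes\I$ bicomplex computes $H_*(S;\Pih)$ rather than something merely related to it. Unwinding the other filtration of the bicomplex (filtering by the exterior/homological degree $k$ first) should collapse the $\hh_*$-direction — for a general pure simplicial poset $\hh_k$ need not vanish, so one does not get immediate collapse as in the Buchsbaum case, and this is why one obtains a genuine spectral sequence rather than a bare isomorphism. I would need to check that after taking homology in the ``$\hh$-direction'' one is left exactly with $C_*(S;\Pih)$ up to reindexing $s+k \mapsto$ total degree; this amounts to a local computation on each closed dual face $G_I$ identifying $H_*(G_I,\dd G_I;\prh{\I})$, together with the Koszul differential, with the corestriction structure of $\Pih$ near $I$. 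Once the bicomplex and this local identification are in place, the convergence statement and the shape of the differentials $d^r\colon E^r_{s,k}\to E^r_{s-r,k+r-1}$ follow formally from the standard machinery of the spectral sequence of a filtered complex, and Theorem~\ref{thmDuality} drops out by specializing to the homology-manifold case where $\hh_k = 0$ for $k\neq 0$ and $\hh_0\cong\ko$.
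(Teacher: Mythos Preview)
Your overall strategy --- corefine $\I$ to $\prh{I}$ on $S'$, apply the coskeleton filtration, and use Lemmas~\ref{lemmaTermSplitGenSheaf} and~\ref{lemmaNewSheafIsOld} to identify the $E^2$-page with $H^{n-1-s}(S;\hh_k\otimes\I)$ --- matches the paper exactly, and that half of the argument is fine. The gap is in the other half: you need $H_*(S';\prh{I})\cong H_*(S;\Pih)$, and the mechanism you propose for this does not work as stated.

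The coskeleton filtration on $C_*(S';\prh{I})$ gives a \emph{filtered} complex, not a bicomplex with two independent differentials, so there is no ``other filtration'' to unwind; the only thing that spectral sequence converges to is $H_*(S';\prh{I})$. Your annihilator observation (that $\Pih(I)$ is the annihilator of $\I(I)$ in $\La(I)$) is correct, but it does not by itself produce a map of complexes, let alone a quasi-isomorphism: the isomorphisms $\La(I)/\I(I)\cong\Pih(I)$ involve a degree shift by $|I|$ that varies with $I$ and is not compatible with the restriction/corestriction maps in any straightforward way.

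What the paper does instead is introduce a genuine double complex $\ca{X}_{k,l}=\bigoplus_{\dim I=k}C_l(G_I;\Pi_I)$, built from the \emph{cosheaf} side. One spectral sequence of $\ca{X}$ collapses because each $G_I$ is contractible, giving $H_*(\ca{X},\dtot)\cong H_*(S;\Pih)$. The other collapses because of a Taylor-type resolution
\[
0\longleftarrow\I(J)\longleftarrow\bigoplus_{\substack{I\leqslant J\\\dim I=0}}\Pi_I\longleftarrow\bigoplus_{\substack{I\leqslant J\\\dim I=1}}\Pi_I\longleftarrow\cdots,
\]
proved by splitting $\Lambda[V]$ into multidegree components and reducing to the reduced homology of a simplex. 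This identifies the horizontal homology of $\ca{X}$ with $C_*(S';\prh{I})$, whence $H_*(\ca{X},\dtot)\cong H_*(S';\prh{I})$. This resolution is the missing ingredient: it is what actually links $\I$ to $\Pih$ at the chain level, and your proposal does not contain it or a substitute for it. Once that isomorphism is in hand, the coskeleton spectral sequence you already described finishes the proof.
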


If $S$ is Buchsbaum, the spectral sequence of Theorem
\ref{thmSpecSec} collapses at a second page and implies

\begin{thm}\label{thmDuality}
For Buchsbaum simplicial poset $S$ of dimension $n-1$ there exists
an isomorphism $H^k(S;\hh_0\otimes\I)\cong H_{n-1-k}(S;\Pih)$
which respects the inner gradings of $\I$ and $\Pih$.
\end{thm}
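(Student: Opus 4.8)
The plan is to derive Theorem \ref{thmDuality} as the degenerate case of Theorem \ref{thmSpecSec}, so that essentially nothing new has to be proved. The only structural input is the characterization of Buchsbaum posets recorded in Remark \ref{remLocalSheavesOnPoset}: a pure simplicial poset $S$ of dimension $n-1$ is Buchsbaum if and only if the structure sheaves $\hh_q$ vanish for all $q\neq 0$.

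First I would invoke Theorem \ref{thmSpecSec} to produce the spectral sequence $E^2_{s,k}\cong H^{n-1-s}(S;\hh_k\otimes\I)\Rightarrow H_{s+k}(S;\Pih)$ with differentials $d^r\colon E^r_{s,k}\to E^r_{s-r,k+r-1}$, respecting the inner gradings. Since $S$ is Buchsbaum, $\hh_k=0$ for $k\neq 0$; tensoring the zero sheaf with $\I$ componentwise again yields the zero sheaf, so $\hh_k\otimes\I=0$ and hence $E^2_{s,k}=0$ for $k\neq 0$. Thus the entire $E^2$ page is concentrated in the single row $k=0$.

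Next, every differential $d^r$ with $r\geqslant 2$ shifts the second index by $r-1\geqslant 1$, so it either originates in or lands in a row other than $k=0$; in either case one of its endpoints vanishes, whence $d^r=0$ for all $r\geqslant 2$. Therefore $E^2_{s,0}=E^\infty_{s,0}$, and since the abutment is computed by a spectral sequence supported only in row $k=0$, the associated graded of $H_m(S;\Pih)$ is concentrated in filtration degree $s=m$. This gives the isomorphism $H_s(S;\Pih)\cong E^2_{s,0}\cong H^{n-1-s}(S;\hh_0\otimes\I)$, compatibly with the inner gradings.

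Finally, putting $s=n-1-k$ rewrites this as $H^k(S;\hh_0\otimes\I)\cong H_{n-1-k}(S;\Pih)$, which is the assertion of Theorem \ref{thmDuality}; the grading compatibility is inherited verbatim from Theorem \ref{thmSpecSec}. I do not expect any genuine obstacle in this step: the entire mathematical content sits in Theorem \ref{thmSpecSec}, and the present statement is its specialization via the vanishing $\hh_{q}=0$ for $q\neq 0$ that \emph{defines} the Buchsbaum condition. (As a sanity check one may note that taking $S$ an orientable homology manifold and $\I=\ko$, so that $\hh_0\cong\ko$, recovers the Poincar\'e duality isomorphism $H^{n-1-k}(S;\ko)\cong H_k(S;\ko)$.)
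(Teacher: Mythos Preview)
Your proposal is correct and follows exactly the approach the paper takes: the paper states just before Theorem~\ref{thmDuality} that ``If $S$ is Buchsbaum, the spectral sequence of Theorem~\ref{thmSpecSec} collapses at a second page and implies'' the result, and your write-up simply supplies the (routine) details of this collapse via $\hh_k=0$ for $k\neq 0$. The only quibble is your parenthetical sanity check: one cannot set $\I=\ko$, since $\I$ is a specific sheaf of ideals; the paper's own sanity check instead restricts to the top inner degree $\ld$, where $\I^{(\ld)}\cong\ko$ and $\Pih^{(\ld)}\cong\ko$, to recover Poincar\'e duality.
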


\begin{cor}
If $S$ is a homology $(n-1)$-manifold, then there is an
isomorphism $H^k(S;\I)\cong H_{n-1-k}(S;\Pih)$, respecting the
inner gradings.
\end{cor}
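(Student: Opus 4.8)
The plan is to derive the Corollary directly from Theorem~\ref{thmDuality} by identifying the sheaf $\hh_0$ with the constant sheaf $\ko$ in the homology-manifold case and then checking that $\hh_0\otimes\I\cong\I$. First I would recall that by Remark~\ref{remLocalSheavesOnPoset} a homology $(n-1)$-manifold $S$ is in particular Buchsbaum, so Theorem~\ref{thmDuality} applies and yields $H^k(S;\hh_0\otimes\I)\cong H_{n-1-k}(S;\Pih)$ respecting the inner gradings. It then suffices to produce an isomorphism of sheaves $\hh_0\otimes\I\cong\I$ over $S$.

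For the identification of $\hh_0$: by definition (see the Definition following Example~\ref{exSheafLocHomol}, together with the isomorphism \eqref{eqLocStrIsom}), $S$ being an orientable homology manifold means precisely that $\loc_{n-1}\cong\ko$, equivalently $\hh_0\cong\ko$ as sheaves over $S$, where $\ko$ is the globally constant sheaf of Example~\ref{exSheafConstant} (value $\ko$ on every $I\neq\minel$, value $0$ on $\minel$, identity restriction maps). Tensoring the sheaf isomorphism $\hh_0\cong\ko$ with $\I$ componentwise gives $\hh_0\otimes\I\cong\ko\otimes\I$. Since $\ko$ is the constant sheaf with identity restriction maps, on every simplex $I\neq\minel$ one has $(\ko\otimes\I)(I)=\ko\otimes_\ko\I(I)\cong\I(I)$ canonically, and on $\minel$ both sheaves vanish because $\I(\minel)\subset\La(\minel)=0$ (the characteristic function imposes nothing at $\minel$, and $\ca V(\minel)=0$ since $\ca V$ is locally constant); moreover these canonical isomorphisms commute with the restriction maps $\hh_0\otimes\I(I<J)=\hh_0(I<J)\otimes\I(I<J)$, which under $\hh_0\cong\ko$ become $\id_\ko\otimes\I(I<J)$. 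Hence $\hh_0\otimes\I\cong\I$ as graded sheaves over $S$, and this isomorphism is evidently compatible with the inner grading of $\I$.

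Substituting this into the conclusion of Theorem~\ref{thmDuality} gives $H^k(S;\I)\cong H^k(S;\hh_0\otimes\I)\cong H_{n-1-k}(S;\Pih)$ with the inner gradings respected, which is exactly the assertion of the Corollary.

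I do not anticipate a genuine obstacle here: the only point requiring mild care is that the sheaf isomorphism $\hh_0\cong\ko$ depends on a choice of orientation (as noted in Remark~\ref{remLocalSheavesOnPoset}, because $\hh_*$ depends on the sign convention while $\loc_*$ does not), but any such choice yields a valid isomorphism of sheaves, and tensoring with $\I$ is functorial, so no compatibility issue arises. The statement should therefore be a formal consequence of Theorem~\ref{thmDuality} together with the definition of a homology manifold.
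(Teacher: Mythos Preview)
Your proof is correct and matches the paper's approach exactly: the paper states this Corollary immediately after Theorem~\ref{thmDuality} without separate proof, treating it as the obvious specialization obtained from $\hh_0\cong\ko$ in the homology-manifold case. One harmless slip: $\La(\minel)=\Lambda[\ca{V}(\minel)]=\Lambda[0]=\ko$ rather than $0$, but $\I(\minel)=0$ nonetheless since it is the ideal generated by the empty set of linear forms, so your conclusion stands.
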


Let $\I^{(q)},\Pih^{(q)}$ denote the homogeneous parts of inner
degree $q$ of the corresponding sheaves $\I,\Pih$.

\begin{cor}[Key corollary]\label{corKeyCorol}
If $S$ is a Buchsbaum simplicial poset, then
$H^j(S;\hh_0\otimes\I^{(q)})=0$ for $j\leqslant n-1-q$.
\end{cor}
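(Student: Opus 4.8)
The plan is to derive the vanishing from Theorem \ref{thmDuality} by examining the inner degree in which $\Pih$ can be nonzero. Recall that for a simplex $I$ of rank $k$, the value $\Pih(I)$ is the principal ideal generated by $\pi_I=\bigwedge_{i\in\ver(I)}\omega_i$, a form of degree $k=|I|$. Since $\La(I)=\Lambda[\ca V(I)]$ is an exterior algebra on a free module of rank $\ld$ (with $k$ of the $\ld$ generators being the $\omega_i$), the ideal $\Pih(I)$ is concentrated in degrees between $|I|$ and $\ld$; in particular $\Pih^{(q)}(I)=0$ whenever $q<|I|$. For a pure simplicial poset of dimension $n-1$ the maximal simplices have rank $n$, but the point is more delicate: the chain complex $\Cc_*(S;\Pih^{(q)})$ has $\Cc_i(S;\Pih^{(q)})=\bigoplus_{\dim I=i}\Pih^{(q)}(I)=\bigoplus_{|I|=i+1}\Pih^{(q)}(I)$, and this summand vanishes as soon as $q<i+1$, i.e. whenever $i>q-1$, equivalently $i\geqslant q$. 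Hence $\Cc_i(S;\Pih^{(q)})=0$ for all $i\geqslant q$, so $H_i(S;\Pih^{(q)})=0$ for $i\geqslant q$ as well (the homology in a given degree only sees the chain groups in that degree and its neighbours, and all of those vanish once $i\geqslant q$; more precisely $H_i$ is a subquotient of $\Cc_i$, which is zero).

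Now I would simply feed this into Theorem \ref{thmDuality}. That theorem, applied to the homogeneous component of inner degree $q$, gives an isomorphism
\[
H^j(S;\hh_0\otimes\I^{(q)})\cong H_{n-1-j}(S;\Pih^{(q)}).
\]
By the computation above, the right-hand side vanishes whenever $n-1-j\geqslant q$, that is, whenever $j\leqslant n-1-q$. This is exactly the asserted range, so $H^j(S;\hh_0\otimes\I^{(q)})=0$ for $j\leqslant n-1-q$, completing the proof.

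The only genuine content is the elementary observation that $\Pih^{(q)}$ is supported, as a cosheaf, on simplices of rank at most $q$, which truncates its chain complex above homological degree $q-1$; everything else is a formal consequence of Theorem \ref{thmDuality}. I expect the main (minor) obstacle to be stating the degree bookkeeping cleanly: one must be careful that $\dim I=i$ means $|I|=i+1$, so the vanishing of $\Pih^{(q)}(I)$ for $|I|>q$ translates to vanishing of $\Cc_i(S;\Pih^{(q)})$ for $i\geqslant q$, and then that $H_{n-1-j}$ vanishes precisely when the index $n-1-j$ falls in that range. No deeper tool than Theorem \ref{thmDuality} and the explicit description of $\Pih$ is needed.
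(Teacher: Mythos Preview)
Your proof is correct and follows essentially the same approach as the paper: apply Theorem~\ref{thmDuality} to reduce to showing $H_i(S;\Pih^{(q)})=0$ for $i\geqslant q$, and then observe that $\Pih^{(q)}(I)=0$ whenever $q<|I|=\dim I+1$, so the chain groups $\Cc_i(S;\Pih^{(q)})$ vanish for $i\geqslant q$. Your version spells out the degree bookkeeping more carefully than the paper does, but the argument is identical in substance.
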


\begin{proof}
By Theorem \ref{thmDuality}, it is sufficient to prove that
$H_j(S;\Pih^{(q)})=0$ for $j\geqslant q$. The ideal
$\Pih(I)=\Pi_I$ is generated by the element $\pi_I$ of degree
$|I|=\dim I+1$. Thus $\Pi_I^{(q)}=0$ for $q\leqslant \dim I$.
Hence the corresponding part of the chain complex vanishes, and
the homology in these degrees vanish as well.
\end{proof}

\begin{rem}
The exterior forms of the top power, $\Lambda[V]^{(\ld)}$, lie in
every ideal $\I(I)$ and $\Pih(I)$. Thus the isomorphism of Theorem
\ref{thmDuality}, when restricted to the top degree, gives the
Poincare duality:
\[
H^k(S;\hh_0)=H^k(S;\hh_0\otimes\I^{(\ld)})\cong
H_{n-1-k}(S;\Pih^{(\ld)})=H_{n-1-k}(S;\ko).
\]
The restriction of the spectral sequence of Theorem
\ref{thmSpecSec} to the top degree gives the Zeeman--McCrory
spectral sequence in a similar way.
\end{rem}

%
%
%
%
%
%
%

\section{Proof of Theorem \ref{thmSpecSec}}\label{SecProof12}

The idea of proof is the following. We construct a filtered double
differential complex $\ca{X}_{k,l}$ and then play with various
spectral sequences converging to its total homology.

Before we proceed we need a small technical lemma. Let $J\in S$.
If $i$ is a vertex of $J$, we have a map $\eta_i\colon
\Pi_i\hookrightarrow \I(J)$, which includes the ideal $\Pi_i$
generated by a linear form $\omega_i$ into the ideal $\I(J)$
generated by a larger set of linear forms.

Consider the sequence of maps
\begin{equation}\label{eqTaylorLike}
0\leftarrow\I(J)\stackrel{\eta}{\leftarrow}
\bigoplus_{\substack{I,\dim I=0\\I\leqslant
J}}\Pi_I\stackrel{\xi}{\leftarrow}\bigoplus_{\substack{I,\dim
I=1\\I\leqslant
J}}\Pi_I\stackrel{\xi}{\leftarrow}\bigoplus_{\substack{I,\dim
I=2\\I\leqslant J}}\Pi_I\stackrel{\xi}{\leftarrow}\ldots
\end{equation}
where $\eta$ is the direct sum of the maps $\eta_i$ over
$i\in\ver(S)$, $i\leqslant J$; and $\xi$ is the direct sum of
inclusion maps $\Pi_I\hookrightarrow \Pi_{I'}$, each rectified by
the incidence sign $[I:I']$. The sign convention obviously implies
that \eqref{eqTaylorLike} is a differential complex. But what is
more important,

\begin{lemma}\label{lemmaTaylorLike}
The sequence \eqref{eqTaylorLike} is exact.
\end{lemma}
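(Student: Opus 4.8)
The plan is to fix the simplex $J$ with vertices $i_1,\ldots,i_k$ and reduce the exactness statement to a purely algebraic computation inside the single exterior algebra $\La(J)=\Lambda[\ca{V}(J)]$. Since all the corestriction maps $\Lah(I<I')$ identify the principal ideals $\Pi_I$ compatibly, the sequence \eqref{eqTaylorLike} is really a sequence of submodules of $\La(J)$: the term in homological degree $d$ is $\bigoplus_{I\less{}{}J,\ \dim I=d}\Pi_I$, where $\Pi_I$ is the principal ideal generated by $\pi_I=\bigwedge_{i\in I}\omega_i$. The $\sta{\ko}$-condition guarantees that $\omega_{i_1},\ldots,\omega_{i_k}$ span a direct summand of $\ca{V}(J)$, so after choosing a complement we may write $\La(J)\cong \Lambda[\omega_{i_1},\ldots,\omega_{i_k}]\otimes\Lambda[W]$ for a free module $W$; the whole complex \eqref{eqTaylorLike} is then obtained from the analogous complex for the subalgebra $\Lambda[\omega_{i_1},\ldots,\omega_{i_k}]$ by applying the exact functor $-\otimes\Lambda[W]$. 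Hence it suffices to treat the case $\ca{V}(J)$ free on exactly the $\omega$'s, i.e. $\La(J)=\Lambda[y_1,\ldots,y_k]$ with $y_j=\omega_{i_j}$.

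Next I would identify the truncated complex (dropping the augmentation $\I(J)$) with a Koszul-type complex. For each subset $\varnothing\neq I\subseteq\{1,\ldots,k\}$, the principal ideal $\Pi_I=(\,\bigwedge_{j\in I}y_j\,)\subset\Lambda[y_1,\ldots,y_k]$ is the free module with basis $\{\bigwedge_{j\in I}y_j\wedge \bigwedge_{j\in A}y_j : A\subseteq \{1,\ldots,k\}\setminus I\}$. The differential $\xi$ sends a generator of $\Pi_I$ to the signed sum of generators of $\Pi_{I'}$ over $I'\less{1}I$, which is exactly the standard simplicial boundary on the faces of the simplex on vertex set $\{1,\ldots,k\}$, tensored (degreewise) with a monomial basis element. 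Concretely, the $d$-th term $\bigoplus_{|I|=d+1}\Pi_I$ is isomorphic to $\widetilde{C}_{d}(\partial\Delta^{k-1})\otimes\Lambda[y_1,\ldots,y_k]$ twisted by a ``highest monomial'' shift, and $\xi$ is the reduced simplicial differential of the boundary of the $(k-1)$-simplex. I would make this bookkeeping precise by grading everything by the multidegree (which $y_j$'s occur) and observing that in each fixed multidegree the complex splits off as a direct sum of copies of the augmented reduced chain complex of a full simplex on the support of that multidegree — which is acyclic. The augmentation $\eta$ matches the reduced augmentation $\widetilde{C}_0\to\widetilde{C}_{-1}$ up to identifying $\I(J)=(y_1,\ldots,y_k)$ with the appropriate multidegree truncation of $\Lambda[y_1,\ldots,y_k]$.

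The main obstacle is the careful sign/multidegree reconciliation: one must check that the incidence signs $[I:I']$ built into $\xi$, the wedge-ordering signs coming from writing $\pi_I\wedge(\text{rest})$ in a fixed monomial basis, and the sign convention \eqref{eqSignSquare} all conspire to give a genuine (squared-to-zero, exact) simplicial differential rather than something off by a sign in one of the squares. I expect this to come down to the by-now familiar fact that any sign convention on a boolean lattice is isomorphic to the canonical one coming from a linear order on the vertices, so that after relabeling $\xi$ really is the Koszul differential; then exactness is immediate from the acyclicity of the Koszul complex of a regular sequence $y_1,\ldots,y_k$ (equivalently, from $\widetilde{H}_*$ of a simplex vanishing in every multidegree). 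Once the sign normalization is in place, the rest is routine.
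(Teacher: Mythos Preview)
Your proposal is correct and follows essentially the same approach as the paper: both arguments use the $\sta{\ko}$-condition to extend $\{\omega_i\}$ to a basis, then split the complex by multidegree so that each graded piece computes the reduced simplicial homology of a nonempty simplex, which vanishes. The paper works directly with a full basis of $\ca{V}(J)$ rather than first tensoring off a complement $\Lambda[W]$, and treats the sign bookkeeping as routine, but the core idea is identical.
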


\begin{proof}
This is very similar to the Taylor resolution of monomial ideal in
commutative polynomial ring (or Koszul resolution), but our
situation is a bit different, since $\Pi_I$ are not free modules
over $\Lambda$. Anyway, the proof is similar to commutative case:
exactness of \eqref{eqTaylorLike} follows from inclusion-exclusion
principle. To make things precise (and also to tackle the case
$\ko=\Zo$) we proceed as follows.

By $\sta{\ko}$-condition, the subspace $\langle\omega_{j}\mid j\in
J\rangle$ is a direct summand in $V\cong \ko^{\ld}$. Let
$\{\nu_1,\ldots,\nu_{\ld}\}$ be a basis of $V$ such that its first
$|J|$ vectors are exactly $\omega_j$, $j\in J$. We simply identify
$J$ with the subset $\{1,\ldots,|J|\}\subseteq [\ld]$ by abuse of
notation. The module $\Lambda[V]$ splits in multidegree
components: $\Lambda=\bigoplus_{A\subseteq[\ld]}\Lambda_{A}$,
where $\Lambda_A$ is a 1-dimensional $\ko$-module generated by
$\bigwedge_{i\in A}\nu_i$. All modules and maps in
\eqref{eqTaylorLike} respect this splitting. Thus
\eqref{eqTaylorLike} can be written as
\begin{gather*}
0\longleftarrow \bigoplus_{A\cap J\neq\emptyset}\Lambda_A
\longleftarrow \bigoplus_{I\subseteq J,
|I|=1}\bigoplus_{A\supseteq I}\Lambda_A \longleftarrow
\bigoplus_{I\subseteq J, |I|=2} \bigoplus_{A\supseteq I}\Lambda_A
\longleftarrow \ldots,
\\
\bigoplus_{A, A\cap J\neq \emptyset}\left(0 \longleftarrow
\Lambda_A\longleftarrow\bigoplus_{I\subseteq A\cap J, |I|=1}
\Lambda_A \longleftarrow \bigoplus_{I\subseteq A\cap J, |I|=2}
\Lambda_A \longleftarrow\ldots \right).
\end{gather*}
For each $A$, the homology of the complex in brackets coincides
with $\Hr_*(\Delta_{A\cap J};\Lambda_A)\cong\Hr_*(\Delta_{A\cap
J};\ko)$, the reduced simplicial homology of the simplex on the
set $A\cap J\neq\emptyset$. Thus homology vanishes.
\end{proof}

Let us define a cosheaf $\ca{N}$ on $S$ taking values in graded
differential complexes. We set $\ca{N}(I)=C_*(G_I;\Pi_I)$, the
simplicial chains of the simplicial complex $G_I$. The
corestriction maps $\ca{N}(I>J)$ are naturally induced by
inclusions of faces $G_I\hookrightarrow G_J$ and inclusions of
coefficient modules $\Pih(I>J)\colon \Pi_I\hookrightarrow \Pi_J$.

The chain complex
\[
\ca{X}_{*,*}=(\Cc_*(S;\ca{N});d_H),\qquad \ca{X}_{k,l} =
\bigoplus_{I, \dim I=k}C_l(G_I;\Pi_I)
\]
is a double complex. It has the horizontal homological
differential $d_H\colon \ca{X}_{k,l}\to\ca{X}_{k-1,l}$
(sheaf-differential) and the vertical differential $d_V\colon
C_l(G_I;\Pi_I)\to C_{l-1}(G_I;\Pi_I)$ (inner differential). The
differentials commute, $d_Hd_V=d_Vd_H$, so we can form a totalized
differential complex
\[
\ca{X}_{j}=\bigoplus_{k+l=j}\ca{X}_{k,l},\qquad
\dtot=d_H+(-1)^kd_V\colon \ca{X}_j\to\ca{X}_{j-1}.
\]

\begin{lemma}\label{lemmaCosheafDoubleComp}
$H_k(\ca{X}, \dtot)\cong H_k(S;\Pih)$.
\end{lemma}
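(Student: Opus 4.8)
The plan is to run the spectral sequence of the double complex $\ca{X}_{*,*}$ obtained by filtering by the horizontal (cosheaf) degree $k$, i.e. by first passing to homology with respect to the inner differential $d_V$. Since $\dim G_I=n-1-\dim I$ and $\ca{N}(\minel)=0$, the bidegrees occurring satisfy $0\leqslant k$ and $0\leqslant l\leqslant n-1-k$; in particular $\ca{X}_{*,*}$ is a finite double complex, so this spectral sequence converges to $H_*(\ca{X},\dtot)$.

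First I would compute the $E^1$-page. The $E^0$-differential is $\pm d_V$, so
\[
E^1_{k,l}=\bigoplus_{\dim I=k}H_l\bigl(C_*(G_I;\Pi_I),d_V\bigr)=\bigoplus_{\dim I=k}H_l(G_I;\Pi_I).
\]
The geometric input is that every dual face $G_I\subset S'$ is a cone: the singleton chain $(I)$ is a vertex of $G_I$, a chain $(I_0<I_1<\cdots)\in G_I$ with $I_0=I$ already contains it, and a chain with $I_0>I$ prolongs to $(I<I_0<I_1<\cdots)\in G_I$; hence $G_I=I\ast\dd G_I$, which is contractible (a point when $I$ is maximal in $S$). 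Consequently the augmented simplicial chain complex of $G_I$ — a complex of free $\ko$-modules — is chain contractible, and it remains so after $\otimes_{\ko}\Pi_I$ (so this also covers $\ko=\Zo$). Therefore $H_l(G_I;\Pi_I)$ equals $\Pi_I$ for $l=0$ and vanishes for $l>0$, and the $E^1$-page is concentrated in the single row $l=0$, with $E^1_{k,0}=\bigoplus_{\dim I=k}\Pi_I$.

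Being concentrated in one row, the spectral sequence degenerates at $E^2$ and $H_j(\ca{X},\dtot)\cong E^2_{j,0}=H_j(E^1_{*,0},d^1)$. It remains to identify $(E^1_{*,0},d^1)$ with the chain complex $\Cc_*(S;\Pih)$. Term by term this is clear: $E^1_{k,0}=\bigoplus_{\dim I=k}\Pi_I=\bigoplus_{\dim I=k}\Pih(I)=\Cc_k(S;\Pih)$, using $\Pih(\minel)=0$. For the differential, $d^1$ is induced by $d_H=\bigoplus[I:I']\,\ca{N}(I>I')$, the sum over $I'\less{1}I$ with $\dim I=k$; passing to $H_0$, the corestriction $\ca{N}(I>I')$ — the inclusion $G_I\hookrightarrow G_{I'}$ on chains together with the coefficient inclusion $\Pi_I\hookrightarrow\Pi_{I'}$ — induces on $H_0$, since $G_I$ and $G_{I'}$ are connected, precisely $\Pi_I\hookrightarrow\Pi_{I'}$, which is the corestriction map $\Pih(I>I')$. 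Hence $d^1$ carries exactly the same maps and incidence signs as the cosheaf differential, so $(E^1_{*,0},d^1)=\Cc_*(S;\Pih)$ and $H_*(\ca{X},\dtot)\cong H_*(S;\Pih)$, as required.

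The substantive point is the contractibility of $G_I$, which is immediate from the cone description above; everything else is bookkeeping. The things to watch are: $C_*(G_I)$ must be the \emph{unreduced} simplicial chain complex, so that the surviving row $l=0$ carries $\Pi_I$ rather than $0$ (with the reduced convention all vertical homology would vanish and the statement would be empty); and one should check that $d^1$ reproduces the incidence signs $[I:I']$ correctly, so that $(E^1_{*,0},d^1)$ is literally $\Cc_*(S;\Pih)$ — although, since altering a differential by an overall sign does not change homology, the isomorphism $H_*(\ca{X},\dtot)\cong H_*(S;\Pih)$ would hold in any case.
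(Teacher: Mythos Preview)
Your proof is correct and follows essentially the same approach as the paper's: both run the spectral sequence of the double complex that first takes homology with respect to the inner differential $d_V$, use the contractibility of each $G_I$ to see that $E^1$ is concentrated on the row $l=0$ with $E^1_{k,0}=\Cc_k(S;\Pih)$, and conclude by degeneration at $E^2$. Your version supplies more detail---the explicit cone description $G_I=I\ast\dd G_I$, the free-module argument covering $\ko=\Zo$, and the verification that $d^1$ matches the cosheaf differential---but the strategy is identical to the paper's.
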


\begin{proof}
Consider the vertical spectral sequence \cite{McCl} converging to
$H_k(\ca{X}, \dtot)$:
\[
\E{V}^r_{*,*},\qquad
\dif{V}_r\colon\E{V}^r_{k,l}\to\E{V}^r_{k-r,l+r-1},
\]
which at first computes vertical homology, then horizontal. We
have
\[
\E{V}^1_{k,l}=\bigoplus_{I, \dim I=k} H_l(G_I;\Pi_I).
\]
Since $G_I$ is contractible, $H_l(G_I;\Pi_I)=0$ for $l\neq 0$ and
$H_0(G_I;\Pi_I)=\Pi_I$. Thus
\[
\E{V}^1_{k,l}=\begin{cases} \bigoplus_{\dim I=k}\Pi_I =
\Cc_k(S;\Pih), \mbox{ if }l=0;\\
0,\mbox{ otherwise.}
\end{cases}
\]
\[
\E{V}^2_{k,l}=\begin{cases} H_k(S;\Pih), \mbox{ if }l=0;\\
0,\mbox{ otherwise.}
\end{cases}
\]
The spectral sequence collapses at the second page, thus
$H_k(\ca{X}, \dtot)\cong H_k(S;\Pih)$.
\end{proof}

Our next goal is to compute the homology of totalization by first
computing the horizontal homology, then vertical. Recall that
$G_I$ is a simplicial subcomplex of $S'$, so the module
$\Cc_*(G_I;\Pi_I)$ is considered as the chain complex of the
constant cosheaf $\Pi_I$. Let the cosheaf $\prh{I}$ be the
corefinement of the sheaf $\I$ (recall this notion from subsection
\ref{SubsecCorefin}).

\begin{lemma}\label{lemmaCorefinement}
The sequence
\begin{equation}\label{eqTaylorLike2}
0\longleftarrow\Cc_*(S';\prh{I})\longleftarrow\bigoplus_{I, \dim
I=0}\Cc_*(G_I;\Pi_I)\longleftarrow\bigoplus_{I, \dim
I=1}\Cc_*(G_I;\Pi_I)\longleftarrow\ldots
\end{equation}
is exact.
\end{lemma}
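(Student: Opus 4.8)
The plan is to prove the exactness of \eqref{eqTaylorLike2} by localizing it over the cells of the barycentric subdivision $S'$. Fix a simplex $\sigma=(I_1<I_2<\ldots<I_s)\in S'$, and examine the summand of each term of \eqref{eqTaylorLike2} supported on $\sigma$. Recall that $G_I\subset S'$ consists of the chains whose minimal element is $\geqslant I$, so $\sigma\in G_I$ precisely when $I\leqslant I_1$, i.e. when $I$ is one of the faces of the simplex on the vertex set $\ver(I_1)$. Moreover, on every such $G_I$ containing $\sigma$, the coefficient cosheaf $\Pi_I$ has its value at $\sigma$ identified, via the corestriction maps of $\Lah$, with the principal ideal generated by $\pi_I$ inside $\La(I_1)$; and the leftmost term $\Cc_*(S';\prh{I})$ has, by definition of the corefinement, value $\ca{I}(I_1)$ at $\sigma$. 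Therefore, after discarding the (common) simplicial-chain index and fixing $\sigma$, the $\sigma$-component of \eqref{eqTaylorLike2} is exactly the sequence
\[
0\longleftarrow\I(I_1)\longleftarrow\bigoplus_{\substack{I,\dim I=0\\ I\leqslant I_1}}\Pi_I\longleftarrow\bigoplus_{\substack{I,\dim I=1\\ I\leqslant I_1}}\Pi_I\longleftarrow\ldots
\]
which is precisely the complex \eqref{eqTaylorLike} for $J=I_1$. By Lemma~\ref{lemmaTaylorLike} this is exact.

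Next I would assemble these local exactness statements into exactness of the whole complex \eqref{eqTaylorLike2}. Each module $\Cc_*(G_I;\Pi_I)=\bigoplus_{\sigma\in G_I}(\text{chains of }\sigma)\otimes\Pi_I$ splits as a direct sum over the simplices $\sigma$ of $S'$, and all the maps in \eqref{eqTaylorLike2} — the inclusions $\Pi_I\hookrightarrow\Pi_{I'}$ rectified by incidence signs in the $S$-direction, together with the (identical, sign-respecting) simplicial differentials and face inclusions in the $S'$-direction — preserve this decomposition by $\sigma$. Hence \eqref{eqTaylorLike2} is the direct sum, over $\sigma=(I_1<\ldots<I_s)\in S'$, of the complexes above tensored with the one-dimensional chain group of $\sigma$, each of which is exact by the previous paragraph. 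A direct sum of exact complexes is exact, so \eqref{eqTaylorLike2} is exact.

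The one point requiring care — and the main obstacle — is the bookkeeping of signs and of the identifications made by restriction/corestriction maps, so that the $\sigma$-graded pieces of \eqref{eqTaylorLike2} really do coincide with \eqref{eqTaylorLike} (for $J=I_1$) \emph{on the nose}, not merely up to an unspecified automorphism. Concretely: the corestriction $\Lah(I>I')=\La(I<I')^{-1}$ identifies the copy of $\Pi_I$ sitting over a cell of $G_I$ with a sub-ideal of $\La(I_1)$, and one must check that under these identifications the map labelled $\xi$ in \eqref{eqTaylorLike2} becomes the map labelled $\xi$ in \eqref{eqTaylorLike}, with matching incidence signs $[I:I']$; similarly the augmentation $\Cc_*(G_I;\Pi_I)\to\Cc_*(S';\prh I)$ must become $\eta_i\colon\Pi_i\hookrightarrow\I(I_1)$. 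This is a routine but slightly delicate compatibility check, of exactly the same flavour as the well-definedness arguments in Construction~\ref{conSheafOnS} and Lemma~\ref{lemmaNewSheafIsOld}; once it is in place the exactness is immediate from Lemma~\ref{lemmaTaylorLike}.
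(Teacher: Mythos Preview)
Your proposal is correct and follows essentially the same approach as the paper: decompose the complex \eqref{eqTaylorLike2} as a direct sum over simplices $\sigma=(I_1<\ldots<I_s)$ of $S'$, identify each summand with the sequence \eqref{eqTaylorLike} for $J=I_1$, and invoke Lemma~\ref{lemmaTaylorLike}. The paper's proof is terser and does not dwell on the sign and identification compatibilities you flag as the main obstacle, but the argument is otherwise identical.
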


\begin{proof}
Since all the maps $\Cc_*(G_I;\Pi_I)\rightarrow\Cc_*(G_I;\Pi_I)$
are induced by inclusions of simplicial subcomplexes, the sequence
\eqref{eqTaylorLike2} decomposes as the direct sum over all
simplices $\Delta=(I_1<\ldots<I_k)\in S'$:
\[
\bigoplus_{\Delta\in
S'}\left(0\longleftarrow\prh{I}(\Delta)\longleftarrow\bigoplus_{\substack{I,
\dim I=0\\\Delta\in
G_I}}\Pi_I\longleftarrow\bigoplus_{\substack{I, \dim
I=1\\\Delta\in G_I}}\Pi_I\longleftarrow\ldots \right)
\]
Since the condition $\Delta\in G_I$ is equivalent to $I_1\geqslant
I$, and by the definition of corefinement $\prh{I}$, the
expression in brackets is equal to
\[
0\longleftarrow\I(I_1)\longleftarrow \bigoplus_{\substack{I, \dim I=0\\
I\leqslant I_1}}\Pi_I \longleftarrow \bigoplus_{\substack{I, \dim I=1\\
I\leqslant I_1}}\Pi_I\longleftarrow\ldots
\]
This sequence is exact by Lemma \ref{lemmaTaylorLike}
\end{proof}

Let us return to the double complex $\ca{X}$ and consider its
horizontal spectral sequence
\[
\E{H}^r_{*,*},\qquad
\dif{H}_r\colon\E{H}^r_{k,l}\to\E{H}^r_{k+r-1,l-r}
\]
which computes horizontal homology first, then vertical.

\begin{lemma}\label{lemmaCorefinement2}
$H_l(\ca{X},\dtot)\cong H_l(S';\prh{I})$.
\end{lemma}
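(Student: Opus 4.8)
The plan is to analyze the horizontal spectral sequence $\E{H}^r_{*,*}$ of the double complex $\ca{X}$ and show it degenerates in the expected way. By construction, $\E{H}^1_{k,l}$ is the horizontal homology $H_k$ of the complex $\bigoplus_{I,\dim I=k}C_l(G_I;\Pi_I)$ in the variable $k$, for each fixed $l$. Here is where Lemma~\ref{lemmaCorefinement} does the work: it says that for each fixed simplicial degree the augmented sequence \eqref{eqTaylorLike2} is exact, so the horizontal homology of $\bigoplus_{I}C_*(G_I;\Pi_I)$ is concentrated in the augmentation spot and equals $\Cc_*(S';\prh{I})$. More precisely, I would read \eqref{eqTaylorLike2} as a resolution: the complex $\bigl(\bigoplus_{I,\dim I=k}C_l(G_I;\Pi_I)\bigr)_k$ (with the sheaf differential $d_H$) has homology $\prh{I}(\cdot)$-chains in horizontal degree $0$ and vanishes elsewhere. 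Thus
\[
\E{H}^1_{k,l}=\begin{cases}\Cc_l(S';\prh{I}),&k=0;\\ 0,&k\neq 0.\end{cases}
\]

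Once the $\E{H}^1$-page is concentrated in the column $k=0$, the remaining (vertical) differential $\dif{H}_1$ on that column is exactly the simplicial boundary of $S'$ with coefficients in the cosheaf $\prh{I}$; one checks this by tracing the definition of $d_V$ through the identification coming from \eqref{eqTaylorLike2} (the corestriction maps in $\ca{N}$ were defined precisely so that this matches). Hence
\[
\E{H}^2_{0,l}\cong H_l(S';\prh{I}),\qquad \E{H}^2_{k,l}=0\ \text{for}\ k\neq 0,
\]
and the spectral sequence collapses at $\E{H}^2$ for degree reasons (all differentials $\dif{H}_r$ with $r\geq 2$ enter or leave a zero column). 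Since the horizontal spectral sequence converges to $H_*(\ca{X},\dtot)$, comparing total degrees gives $H_l(\ca{X},\dtot)\cong \E{H}^2_{0,l}\cong H_l(S';\prh{I})$, which is what we want.

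I expect the main obstacle to be the bookkeeping in the second paragraph: verifying that the induced differential on the surviving column $k=0$ of $\E{H}^1$ genuinely coincides with the chain differential of $(\Cc_*(S';\prh{I}),d)$, rather than merely being abstractly isomorphic to some differential. This is a diagram-chase through the exact sequence \eqref{eqTaylorLike2}, using that the augmentation map $\eta$ in \eqref{eqTaylorLike} together with the corestriction maps of $\ca{N}$ were set up compatibly; the sign convention \eqref{eqSignSquare} and the signs rectifying $\xi$ must be tracked carefully so that no spurious sign discrepancy appears. Everything else — exactness input from Lemma~\ref{lemmaCorefinement}, the concentration of $\E{H}^1$ in one column, and the collapse by degree reasons — is formal.
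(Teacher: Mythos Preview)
Your proof is correct and follows essentially the same route as the paper: analyze the horizontal spectral sequence, use Lemma~\ref{lemmaCorefinement} to see that $\E{H}^1$ is concentrated in the column $k=0$ with value $\Cc_l(S';\prh{I})$, then collapse. The paper's version is terser and does not spell out the check that the induced differential on the surviving column is the simplicial differential of $\prh{I}$, so your extra paragraph of bookkeeping only makes the argument more complete.
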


\begin{proof}
By Lemma \ref{lemmaCorefinement} the horizontal homology of
$\ca{X}$ vanishes except in degree $k=0$, where it is isomorphic
to $\Cc_*(S';\prh{I})$. Thus
\[
\E{H}^2_{k,l}\cong \begin{cases}H_l(S';\prh{I}), \mbox{ if } k=0;\\
0,\mbox{ otherwise.}
\end{cases}
\]
The spectral sequence collapses and the statement follows.
\end{proof}

Finally, we make use of the coskeleton filtration on $S'$.

\begin{lemma}\label{lemmaCoskelFilt}
There exists a spectral sequence $E^r_{s,k}\Rightarrow
H_{s+k}(S';\prh{I})$, $d^r\colon E^r_{s,k}\to E^r_{s-r,k+r-1}$,
$E^2_{s,k}\cong H^{n-1-s}(S;\hh_k\otimes \I)$. This spectral
sequence respects the inner gradings on $\I$ and $\prh{I}$
\end{lemma}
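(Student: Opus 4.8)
\emph{Proof plan.} The plan is to feed the cosheaf $\prh{I}$ into the homology spectral sequence of the coskeleton filtration $\emptyset=S_{-1}\subset S_0\subset\ldots\subset S_{n-1}=S'$ of \eqref{eqCoskeletonSimp}. This standard construction (see \cite{McCl}) gives a spectral sequence $E^r_{s,k}$ with $d^r\colon E^r_{s,k}\to E^r_{s-r,k+r-1}$, converging to $H_{s+k}(S';\prh{I})$, whose first page is $E^1_{s,k}=H_{s+k}(S_s,S_{s-1};\prh{I})$; and, exactly as in the untwisted case treated in Section \ref{SecSheaves}, the first differential $d^1$ is the sum over pairs $I\less{1}J$ of the connecting-homomorphism-and-excision maps $m^{k,\I}_{I,J}$ of \eqref{eqMattachingMapA}. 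So the only thing at stake is the identification of $(E^1_{*,k},d^1)$.

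For the first page, Lemma \ref{lemmaTermSplitGenSheaf} gives $E^1_{s,k}\cong\bigoplus_{\dim G_I=s}H_{s+k}(G_I,\dd G_I;\prh{I})$. Since $S$ is pure, $\dim G_I=n-1-\dim I$, so the index set is $\{I\in S\mid\dim I=n-1-s\}$, and the summand for $I$ is $H_{k+\dim G_I}(G_I,\dd G_I;\prh{I})=\new{I}_k(I)$ in the notation preceding Lemma \ref{lemmaNewSheafIsOld}. Each value $\I(I)$ is a direct sum of the rank-one modules $\Lambda_A$ with $A\cap I\neq\emptyset$ (in the basis used in the proof of Lemma \ref{lemmaTaylorLike}), hence a free, in particular torsion-free, $\ko$-module; so Lemma \ref{lemmaNewSheafIsOld} applies and yields $\new{I}_k\cong\hh_k\otimes\I$. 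Consequently $E^1_{s,k}\cong\bigoplus_{\dim I=n-1-s}(\hh_k\otimes\I)(I)=C^{n-1-s}(S;\hh_k\otimes\I)$.

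It remains to match the differentials. By the second assertion of Lemma \ref{lemmaNewSheafIsOld}, under the isomorphisms above each $m^{k,\I}_{I,J}$ becomes $m^k_{I,J}\otimes\I(I<J)$; multiplying by the incidence sign $[J:I]$ this is precisely the restriction map $(\hh_k\otimes\I)(I\less{1}J)$, since the restriction maps of $\hh_k$ on neighboring simplices are the $[J:I]m^k_{I,J}$ by Construction \ref{conSheafOnS}. Summing over all $I\less{1}J$ reproduces the cochain differential of the sheaf $\hh_k\otimes\I$, just as the corollary to Construction \ref{conSheafOnS} does in the untwisted case. Hence $(E^1_{*,k},d^1)\cong(C^{n-1-*}(S;\hh_k\otimes\I),d)$, and passing to homology gives $E^2_{s,k}\cong H^{n-1-s}(S;\hh_k\otimes\I)$. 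Finally, $\I\subset\La$ consists of homogeneous ideals, so $\I$ and its corefinement $\prh{I}$ carry an inner grading preserved by every corestriction map, connecting homomorphism and excision isomorphism used above; thus the whole spectral sequence splits as a direct sum over inner degrees, which is the last claim.

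The step I expect to be the real work is the differential matching: pinning down the excision isomorphisms and the sign conventions so that $d^1$ is literally the sheaf differential of $\hh_k\otimes\I$ and $(d^1)^2=0$ reflects \eqref{eqSignSquare} with twisted coefficients. This is, however, the same verification already carried out for the structure sheaves $\hh_q$ in Section \ref{SecSheaves}; Lemmas \ref{lemmaTermSplitGenSheaf} and \ref{lemmaNewSheafIsOld} are set up precisely to transport that verification to the twisted setting, so no genuinely new difficulty arises.
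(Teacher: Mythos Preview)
Your proposal is correct and follows essentially the same route as the paper's proof: run the coskeleton-filtration spectral sequence with coefficients $\prh{I}$, apply Lemma~\ref{lemmaTermSplitGenSheaf} to split $E^1$, and invoke Lemma~\ref{lemmaNewSheafIsOld} (using torsion-freeness of $\I(I)$) to identify $\new{I}_k\cong\hh_k\otimes\I$. The paper is terser, simply asserting that the values of $\I$ are torsion-free and leaving the $d^1$-identification implicit in the sheaf isomorphism $\new{I}_k\cong\hh_k\otimes\I$; your explicit unwinding of the sign $[J:I]^2=1$ in the differential matching and your remark on the inner grading are welcome elaborations but add nothing structurally new.
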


\begin{proof}
Consider the spectral sequence associated with the coskeleton
filtration of $S'$ for the coefficient system $\prh{I}$:
\[
E^r_{s,k}\Rightarrow H_{s+k}(S';\prh{I}),\quad d^r\colon
E^r_{s,k}\to E^r_{s-r,k+r-1},
\]
\[
E^1_{s,k}\cong H_{s+k}(S_s,S_{s-1};\prh{I}).
\]
We have
\[
E^1_{s,k}\cong H_{s+k}(S_s,S_{s-1};\prh{I}) = \bigoplus_{I,\dim
G_I=s}H_{s+k}(G_I,\dd G_I;\prh{I})= \bigoplus_{I,\dim
G_I=s}\new{I}_k(I)
\]
by Lemma \ref{lemmaTermSplitGenSheaf}. Since the values of $\I$
are torsion-free, Lemma \ref{lemmaNewSheafIsOld} implies
\[
\bigoplus_{I,\dim G_I=s}\new{I}_k(I)\cong\bigoplus_{I,\dim
G_I=s}(\I\otimes\hh_k)(I) = \Cc^{n-1-s}(S;\I\otimes\hh_k).
\]
Therefore,
\[
E^2_{s,k}\cong H^{n-1-s}(S;\I\otimes\hh_k)
\]
which proves the statement.
\end{proof}

The combination of lemmas \ref{lemmaCosheafDoubleComp},
\ref{lemmaCorefinement2}, and \ref{lemmaCoskelFilt} proves Theorem
\ref{thmSpecSec}.

%
%
%
%
%
%
%

\section{Manifolds with locally standard torus
actions}\label{SecManifolds}

%
%

\subsection{Orbit spaces}

Let $T^n$ be a compact $n$-dimensional torus. The standard
representation of $T^n$ is a representation
$T^n\curvearrowright\Co^n$ by coordinate-wise rotations, i.e.
\[(t_1,\ldots,t_n)\cdot(z_1,\ldots,z_n)=(t_1z_1,\ldots,t_nz_n),\]
for $z_i,t_i\in \Co$, $|t_i|=1$. An action of $T^n$ on a (compact
connected smooth) manifold $M^{2n}$ is called \emph{locally
standard}, if $M$ has an atlas of standard charts, each isomorphic
to a subset of the standard representation. More precisely, a
standard chart on $M$ is a triple $(U,f,\psi)$, where $U\subset M$
is a $T^n$-invariant open subset, $\psi$ is an automorphism of
$T^n$, and $f$ is a $\psi$-equivariant homeomorphism $f\colon U\to
W$ onto a $T^n$-invariant open subset $W\subset\Co^n$ (i.e.
$f(t\cdot y) = \psi(t)\cdot f(y)$ for all $t\in T^n$, $y\in U$).

The orbit space $\Co^n/T^n$ of the standard representation is the
nonnegative cone $\Ro_{\geqslant}^n=\{x\in \Ro^n\mid x_i\geqslant
0\}$. Thus an orbit space of a locally standard action obtains the
structure of compact connected $n$-dimensional manifold with
corners. Recall that a manifold with corners is a topological
space locally modeled by open subsets of $\Ro_{\geqslant}^n$ with
the combinatorial stratification induced from the face structure
of $\Ro_{\geqslant}^n$ (details relevant to the study of torus
actions can be found in \cite{BPnew} or \cite{Yo}).

%
%

\subsection{Characteristic functions}

Let $Q=M/T^n$ be the orbit space of a locally standard action. Let
$\fac(Q)$ denote the set of facets (i.e. faces of codimension
$1$). Every face $F$ of codimension $k$ lies in exactly $k$
distinct facets of $Q$ (such manifolds with corners are called
\emph{nice} in \cite{MasPan} or \emph{manifolds with faces}
elsewhere). Consider the set $S_Q$ of all faces of $Q$, including
$Q$ itself, and define the order on $S_Q$ by reversed inclusion.
Since $Q$ is nice, $S_Q$ is a simplicial poset. The minimal
element of $S_Q$ is the maximal face, that is the space $Q$
itself. The facets of $Q$ correspond to the vertices of $S_Q$. For
convenience we denote abstract elements of $S_Q$ by $I$,$J$, etc.
and the corresponding faces of $Q$ will be denoted by $F_I$,
$F_J$, etc.

If $F\in \fac(Q)$ and $x$ is a point from interior of $F$, then
the stabilizer of $x$, denoted by $\lambda(F)$, is a 1-dimensional
toric subgroup in $T^n$. If $F_I$ is a codimension $k$ face of
$Q$, contained in the facets $F_1,\ldots,F_k\in \fac(Q)$, then the
stabilizer of an orbit $x\in F_I^{\circ}$ is the $k$-dimensional
torus $T_I=\lambda(F_1)\times\ldots\times\lambda(F_k)\subset T^n$,
where the product is free inside $T^n$. This puts a specific
restriction on subgroups $\lambda(F)$, $F\in \fac(Q)$. In general,
the map
\begin{equation}\label{eqCharFunc}
\lambda\colon \fac(Q)\to \{\mbox{1-dimensional toric subgroups of
} T^n\}
\end{equation}
is called a \emph{characteristic function}, if, whenever the
facets $F_1,\ldots,F_k$ have nonempty intersection, the map
\[
\lambda(F_1)\times\ldots\times\lambda(F_k)\to T^n,
\]
induced by inclusions $\lambda(F_i)\hookrightarrow T^n$, is
injective and splits. This condition is called $(\ast)$-condition.
Notice, that $F_1,\ldots,F_k$ have nonempty intersection whenever
the corresponding vertices of $S_Q$ are the vertices of some
simplex.

From the $(\ast)$-condition follows that the map
\begin{equation}\label{eqHomolSplits}
H_1(\lambda(F_1)\times\ldots\times\lambda(F_k);\ko)\to
H_1(T^n;\ko)
\end{equation}
is also injective and splits for any ground ring $\ko$. Thus the
homology classes $\omega_1,\ldots,\omega_k$ of subgroups
$\lambda(F_1),\ldots,\lambda(F_k)$ freely span a direct summand in
$H_1(T^n;\ko)$. This motivates the definition of homological
characteristic function given in Section~\ref{SecAlgebras}.
Surely, the exterior algebra $\Lambda[V]$ generated by a
$\ko$-module $V$ has a clear meaning of the whole homology algebra
of a torus: $\Lambda[H_1(T^n;\ko)]\cong H_*(T^n;\ko)$.

If the function \eqref{eqCharFunc} satisfies \eqref{eqHomolSplits}
for some specific ground ring $\ko$, we say that $\lambda$
satisfies $\sta{\ko}$-condition. It is easy to see that the
topological $(*)$-condition is equivalent to $\sta{\Zo}$, and that
$\sta{\Zo}$ implies $\sta{\ko}$ for any $\ko$.

%
%

\subsection{Model spaces}

Let $M$ be a manifold with locally standard action and $\mu\colon
M\to Q$ be the projection to the orbit space. The free part of the
action has the form $\mu|_{Q^{\circ}}\colon \mu^{-1}(Q^{\circ})\to
Q^{\circ}$, where $Q^{\circ}=Q\setminus \dd Q$ is the interior of
the manifold with corners. The free part is a principal torus
bundle over $Q^{\circ}$. It can be uniquely extended over $Q$ and
defines a principal $T^n$-bundle $\rho\colon Y\to Q$.

Therefore any manifold with locally standard action determines
three objects: the nice manifold with corners $Q$, the principal
torus bundle $\rho\colon Y\to Q$, and the characteristic function
$\lambda$. One can recover the manifold $M$ from these data by the
following standard construction.

\begin{con}[Model space]\label{conModelSpace}
Let $\rho\colon Y\to Q$ be a principal $T^n$-bundle over a nice
manifold with corners $Q$ and $\lambda$ be a characteristic
function on $\fac(Q)$. Consider the space $X\eqd Y/\sim$, where
$y_1\sim y_2$ if and only if $\rho(y_1)=\rho(y_2)\in F_I^{\circ}$
for some face $F_I$ of $Q$, and $y_1,y_2$ lie in the same
$T_I$-orbit of the action. There exists a natural
$T^n$-equivariant map $f\colon Y\to X$.
\end{con}

Every manifold with locally standard torus action is equivariantly
homeomorphic to its model (\cite[Cor.2]{Yo}), so in the following
we will work with $X$ instead of $M$.

%
%

\subsection{Filtrations}

Since $Q$ is a manifold with corners, there is a natural
filtration on $Q$:
\begin{equation}\label{eqFiltrQ}
\emptyset=Q_{-1}\subset Q_0\subset Q_1\subset\ldots\subset
Q_{n-1}=\dd Q\subset Q=Q_n,
\end{equation}
where $Q_i$ is the union of all faces of dimension $\leqslant i$.
It lifts to the $T^n$-invariant filtration on $Y$:
\begin{equation}\label{eqFiltrY}
\emptyset=Y_{-1}\subset Y_0\subset Y_1\subset\ldots\subset
Y_{n-1}\subset Y_n=Y,
\end{equation}
where $Y_i=\rho^{-1}(Q_i)$. This in turn descends to the
filtration on $X$:
\begin{equation}\label{eqFiltrX}
\emptyset=X_{-1}\subset X_0\subset X_1\subset\ldots\subset
X_{n-1}\subset X_n=X,
\end{equation}
$X_i=f(Y_i)$. It is easily seen that \eqref{eqFiltrX} is the
filtration of $X$ by orbit types, i.e. $X_i$ is the union of all
orbits of dimension at most $i$. We have $\dim X_i=2i$. The maps
$\mu\colon X\to Q$, $\rho\colon Y\to Q$ and $f\colon Y\to X$
preserve the filtrations.

The filtrations give rise to homological spectral sequences.

\begin{gather}
\E{Q}^1_{p,q}=H_{p+q}(Q_p,Q_{p-1}) \Rightarrow H_{p+q}(Q),\qquad
\dif{Q}^r\colon \E{Q}^r_{*,*}\to\E{Q}^r_{*-r,*+r-1}\\
\E{Y}^1_{p,q}\cong H_{p+q}(Y_p,Y_{p-1}) \Rightarrow
H_{p+q}(Y),\qquad \dif{Y}^r\colon \E{Y}^r_{*,*}\to\E{Y}^r_{*-r,*+r-1}\\
\E{X}^1_{p,q}\cong H_{p+q}(X_p,X_{p-1}) \Rightarrow
H_{p+q}(X),\quad \dif{X}^r\colon
\E{X}^r_{*,*}\to\E{X}^r_{*-r,*+r-1}.
\end{gather}

In the following we also need the spectral sequence associated to
the filtration of $Q$ truncated at $Q_{n-1}=\dd Q$:
\begin{equation}\label{eqFiltrDQ}
\emptyset=Q_{-1}\subset Q_0\subset Q_1\subset\ldots\subset
Q_{n-1}=\dd Q,
\end{equation}
\[
\E{\dd Q}^1_{p,q}=\begin{cases} H_{p+q}(Q_p,Q_{p-1})\mbox{ for }
p<n,\\ 0,\mbox{ for } p=n\end{cases} \Rightarrow H_{p+q}(\dd Q).
\]

Note that $\E{X}^1_{p,q}=0$ for $q>p$ by dimensional reasons. The
map $f\colon Y\to X$ induces the map of spectral sequences
\[
f_*^r\colon \E{Y}^r_{p,q}\to\E{X}^r_{p,q}.
\]

The main topological result of this paper is the following

\begin{thm}\label{thmToricSpecSec}
If $Q$ is orientable and all proper faces of $Q$ are acyclic over
$\ko$, then the map $f_*^2\colon\E{Y}^2_{p,q}\to \E{X}^2_{p,q}$ is
an isomorphism for $q<p$ or $q=p=n$, and injective for $q=p<n$.
\end{thm}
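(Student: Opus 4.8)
\textbf{Proof proposal for Theorem \ref{thmToricSpecSec}.}
The plan is to compare the three filtrations on $Q$, $Y$, $X$ by fitting the maps $f_*^r$ into a long exact sequence whose intermediate terms are sheaf cohomology groups $H^*(S_Q;\I)$, and then kill those terms in the relevant bidegrees using the Key Corollary \ref{corKeyCorol}. First I would analyze the $E^1$-pages. For $Y$, the bundle $\rho\colon Y\to Q$ restricted to $Q_p$ is (up to the usual clutching) controlled by the base filtration, and since proper faces of $Q$ are acyclic the Leray--Hirsch / Künneth argument gives $\E{Y}^1_{p,q}\cong H_{p+q}(Y_p,Y_{p-1})\cong \bigoplus_{\dim F_I = p} H_{p+q-p}(F_I,\dd F_I)\otimes H_q(T^n)$, so that the $q$-th row of $\E{Y}^1$ is the cochain complex $C^{n-1-p}(S_Q;\hh_0\otimes \La^{(q)})$ (using acyclicity of proper faces, $\hh_0$ carries the whole contribution). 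For $X$, by the definition of the model space the pieces $X_p/X_{p-1}$ over a face $F_I$ of codimension $k$ have the torus $T^n/T_I$ as fiber rather than $T^n$, so the $q$-th row of $\E{X}^1$ is $C^{n-1-p}(S_Q;\hh_0\otimes(\La/\I)^{(q)})$, where $\La/\I$ is the quotient sheaf whose value on $I$ is $\Lambda[H_1(T^n)]/(\omega_{i_1},\ldots,\omega_{i_k})\cong H_*(T^n/T_I)$. The map $f_*^1$ is then, row by row, the map of cochain complexes induced by the sheaf surjection $\La\twoheadrightarrow\La/\I$.

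Next I would pass to the short exact sequence of sheaves $0\to\I\to\La\to\La/\I\to 0$ (which is exact because each $\I(I)$ is a direct summand of $\La(I)$ by the $\sta{\ko}$-condition, so componentwise exactness survives), giving a short exact sequence of cochain complexes in each inner degree $q$ and hence a long exact sequence
\[
\cdots\to H^{n-1-p}(S_Q;\hh_0\otimes\I^{(q)})\to \E{Y}^1_{p,q}\xrightarrow{f^1_*} \E{X}^1_{p,q}\to H^{n-p}(S_Q;\hh_0\otimes\I^{(q)})\to\cdots
\]
after the reindexing $\E{Y}^1_{p,q}\cong H^{n-1-p}(S_Q;\hh_0\otimes\La^{(q)})$, and likewise for $X$. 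Actually the cleaner route is to note that $f^1_*$ is itself a map of complexes with kernel and cokernel complexes built from $\I$, so that the mapping-cone long exact sequence already at the $E^2$ level reads
\[
\cdots\to H^{n-p}(S_Q;\hh_0\otimes\I^{(q)})\to \E{Y}^2_{p,q}\xrightarrow{f^2_*} \E{X}^2_{p,q}\to H^{n-1-p}(S_Q;\hh_0\otimes\I^{(q)})\to\cdots .
\]
By the Key Corollary \ref{corKeyCorol}, $H^j(S_Q;\hh_0\otimes\I^{(q)})=0$ for $j\leqslant n-1-q$; and the inner degree appearing in bidegree $(p,q)$ of these spectral sequences is $q$ (the torus-homology degree), so both flanking terms vanish as soon as $n-1-p\leqslant n-1-q$, i.e. $p\geqslant q$, with strict inequality $p>q$ making the flanking term $H^{n-1-p}$ vanish outright and $p=q$ leaving only the $H^{n-p}=H^{n-1-q}$ term, which by the corollary (boundary case $j=n-1-q$) still vanishes, so that $f^2_*$ is injective for $q=p<n$ and surjectivity fails only against that one term. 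The top corner $q=p=n$ needs a separate check: there $\I^{(n)}=\La^{(n)}$ is the whole top exterior power sheaf, contributing trivially to the relative term, and $\E{Y}^2_{n,n}\cong H_n(Q,\dd Q)\cong \E{X}^2_{n,n}$ directly because $Q$ is orientable, so $f^2_*$ is an isomorphism there as well.

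The main obstacle is the first step: identifying the $E^1$-pages of the $Y$- and $X$-spectral sequences with the cochain complexes of the sheaves $\hh_0\otimes\La$ and $\hh_0\otimes(\La/\I)$ in a way that is natural enough for the map $f^1_*$ to be induced by $\La\twoheadrightarrow\La/\I$ and compatible with the structure-sheaf restriction maps $\hh_0(I<J)=[J:I]m^0_{I,J}$. This requires (a) the acyclicity of proper faces of $Q$ to collapse the relative homology $H_*(F_I,\dd F_I)$ of each dual pseudocell onto a single group isomorphic to $\hh_0(I)\otimes H_*(T^n)$ (resp. $\otimes H_*(T^n/T_I)$), via the fibre bundle structure of $Y$ over $F_I$ (trivial over the contractible $F_I$) and the corresponding quotient structure of $X$ over $F_I^\circ$; (b) a careful sign/orientation bookkeeping so that the first differentials $\dif{Y}^1$, $\dif{X}^1$ match the sheaf differentials; and (c) verifying that the reindexing $\E{}^2_{p,q}\leftrightarrow H^{n-1-p}(S_Q;-^{(q)})$ correctly locates the inner degree, which is what makes the Key Corollary bite exactly in the range $q\leqslant p$. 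Once this dictionary is in place, the rest is the formal exact-sequence chase together with the orientability input at the top corner.
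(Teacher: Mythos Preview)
Your approach is exactly the one the paper takes: identify the $E^1$-rows with cochain complexes of sheaves on $S_Q$, fit $f^1_*$ into a short exact sequence with kernel $\I$, and then invoke the Key Corollary on the resulting long exact sequence. However, there is a genuine gap in your execution at $p=n-1$ and $p=n$.

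Your identification $\E{Y}^1_{p,q}\cong C^{n-1-p}(S_Q;\hh_0\otimes\La^{(q)})$ uses acyclicity of the face $F_I$ to collapse $H_*(F_I,\dd F_I)$ to a single $\ko$ in top degree. But for $p=n$ the only face is $Q$ itself, which is \emph{not} assumed acyclic; moreover $\La(\minel)=0$, so your cochain complex simply has $C^{-1}=0$. Thus your complex misses the column $\E{Y}^1_{n,*}=H_{n+*}(Y,\dd Y)$. Consequently your sheaf cohomology $H^{n-1-p}(S_Q;\hh_0\otimes\La^{(q)})$ computes $\E{\dd Y}^2_{p,q}$, not $\E{Y}^2_{p,q}$, and the discrepancy is felt at both $p=n$ (where your $H^{-1}=0$) and $p=n-1$ (where your $H^0$ lacks the quotient by $\im(d^{-1})$). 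Your argument as written thus proves the remark at the end of Section~\ref{SecProof3} about $\dd Y,\dd X$, not Theorem~\ref{thmToricSpecSec}.

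The paper repairs this precisely: it defines \emph{non-truncated} sheaves $\hh^Y,\hh^X$ whose value on $\minel$ is $H_{n+q}(Y,\dd Y)$ (resp.\ $H_{n+q}(X,\dd X)$), observes (Remark~\ref{remXYsameForPeqN}) that $f^1_*$ is an isomorphism at $p=n$ because $Y/\dd Y\cong X/\dd X$, and uses this to show (Lemma~\ref{lemmaIshortNonTrunc}) that the short exact sequence $0\to\I\to\hh^Y\to\hh^X\to 0$ still holds with the $\minel$-values included, since $\I(\minel)=0$. After that the long exact sequence argument runs as you describe. Two smaller points: your ``cleaner route'' long exact sequence has the flanking indices swapped (it should read $H^{n-1-p}(\I^{(q)})\to\E{Y}^2_{p,q}\to\E{X}^2_{p,q}\to H^{n-p}(\I^{(q)})$), and your separate treatment of $(p,q)=(n,n)$ does not go through as stated --- $\E{Y}^2_{n,n}$ is a kernel inside $H_{2n}(Y,\dd Y)$, not $H_n(Q,\dd Q)$, and one cannot simply read off the isomorphism without the $\minel$-machinery above.
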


%
%
%
%
%
%
%

\section{Proof of Theorem \ref{thmToricSpecSec}}\label{SecProof3}

At first we prove a technical lemma which is extremely useful when
one passes from the topology of $Q$ to the topology of its
underlying simplicial poset $S_Q$.

For a poset $S$ consider a space $P=\cone|S|$. A coskeleton
filtration of $S$ extends to the coskeleton filtration of $P$:
\[
|S_0|\subset\ldots\subset|S_{n-1}|=|S|\subset P,
\]
and the corresponding homological spectral sequence is denoted
$\E{P}^*_{*,*}$.

For convenience we introduce the following definition.

\begin{defin}
An oriented manifold with corners $Q$ is called Buchsbaum if all
its proper faces are acyclic. If $Q$ is Buchsbaum and $Q$ itself
is acyclic, then $Q$ is called Cohen--Macaulay. As usual, all
notions depend on the ground ring $\ko$.
\end{defin}

Any face $G$ of Buchsbaum manifold with corners $Q$ is an
orientable manifold with corners. The acyclicity of $G$ implies
that $H_j(G,\dd G)=0$ for $j\neq \dim G$ and $H_{\dim G}(G,\dd
G)\cong \ko$ by Poincare--Lefschetz duality.

\begin{lemma}\label{lemmaUniverPosets}\mbox{}

$(1)_n$ Let $Q$ be a Buchsbaum manifold with corners, $\dim Q =
n$, $S_Q$ be its underlying poset, and $P=\cone(|S_Q|)$. Then
there exists a face-preserving map $\varphi\colon Q\to P$ which
induces the identity isomorphism of posets of faces and the
isomorphism of the spectral sequences $\varphi_*\colon \E{\dd
Q}^r_{*,*}\stackrel{\cong}{\to}\E{S}^r_{*,*}$ for $r\geqslant 1$.

$(2)_n$ If $Q$ is Cohen--Macaulay of dimension $n$, then $\varphi$
induces the isomorphism of spectral sequences $\varphi_*\colon
\E{Q}^r_{*,*}\stackrel{\cong}{\to}\E{P}^r_{*,*}$ for $r\geqslant
1$.
\end{lemma}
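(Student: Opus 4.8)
The plan is to prove both statements simultaneously by induction on $n=\dim Q$, since the link of a simplex in $S_Q$ is the underlying poset of the corresponding face of $Q$, and constructing $\varphi$ on $Q$ naturally requires knowing the construction works on all proper faces. The base case $n=0$ is trivial (a point). For the inductive step, I would first build $\varphi$ face by face: for each proper face $G$ of $Q$ of dimension $k<n$, statement $(2)_k$ (which applies because $G$ is a Cohen--Macaulay manifold with corners, being acyclic with all proper faces acyclic) gives a map $\varphi_G\colon G\to \cone(|S_G|)=|G_{I}|$ onto the dual pseudocell, compatible with the face poset. The compatibility of these maps on intersections of faces needs to be arranged, but since $S_Q$ restricted to $\opst_{S_Q} I$ matches the boundary combinatorics, one glues the $\varphi_G$ over the face poset to get a map $\dd Q \to |S_{n-1}'| \subset |S|$ on the boundary; then one extends over the interior of $Q$ to land in the cone point region, which is possible because $Q$ is a manifold with corners (collar neighborhoods) — concretely, extend $\varphi$ to send $Q$ to $P=\cone|S_Q|$ using a collar of $\dd Q$.

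The second half of the inductive step is to verify that $\varphi$ induces isomorphisms of spectral sequences. For $(1)_n$: the coskeleton filtration on $|S|$ has associated graded pieces $H_{p+q}(S_p,S_{p-1})\cong\bigoplus_{\dim G_I=p} H_{p+q}(G_I,\dd G_I)$, while the filtration \eqref{eqFiltrDQ} on $\dd Q$ has pieces $H_{p+q}(Q_p,Q_{p-1})\cong\bigoplus_{\dim F_I=p} H_{p+q}(F_I,\dd F_I)$. By the remark preceding the lemma, acyclicity of proper faces plus Poincar\'e--Lefschetz duality forces $H_j(F_I,\dd F_I)=0$ for $j\neq \dim F_I$ and $\cong\ko$ in the top degree — and the same holds for the pseudocells $G_I$, which are cones hence contractible, so $H_j(G_I,\dd G_I)\cong \Hr_{j-1}(\dd G_I)$, and $\dd G_I$ is a homology sphere of the right dimension when $Q$ is Buchsbaum. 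Thus on $\E{}^1$ both spectral sequences are concentrated in the diagonal $q=0$ with the same ranks, and $\varphi_*$ on each summand $H_p(F_I,\dd F_I)\to H_p(G_I,\dd G_I)$ is an isomorphism because $\varphi|_{F_I}=\varphi_{F_I}$ was constructed (via $(2)_{<n}$) to be a degree-one map of the relevant relative homology. Since $\varphi_*$ is an $\E{}^1$-isomorphism, it is an isomorphism on all later pages.

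For $(2)_n$, when $Q$ is additionally acyclic (Cohen--Macaulay), the filtration on $Q$ has one more step $Q_{n-1}=\dd Q\subset Q_n=Q$, matched by $|S_{n-1}|=|S|\subset P=\cone|S|$ on the target side. The long exact sequences of the pairs $(Q,\dd Q)$ and $(P,|S|)=(\cone|S|,|S|)$ fit into a map of spectral sequences; the extra column $p=n$ contributes $H_{n+q}(Q,\dd Q)$ versus $H_{n+q}(\cone|S|,|S|)\cong \Hr_{n+q-1}(|S|)$, and one checks $\varphi$ restricted to this pair is again a top-degree isomorphism (both are $\ko$ in degree $n$ and vanish otherwise, using acyclicity of $Q$ and that $S$ is then a homology sphere by $(1)_n$ applied internally). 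Combined with the already-established isomorphism on the truncated part from $(1)_n$, the five-lemma at the level of $\E{}^1$ gives that $\varphi_*\colon \E{Q}^1\to \E{P}^1$ is an isomorphism, hence so is $\varphi_*^r$ for all $r\geqslant 1$. The main obstacle I anticipate is the gluing step: arranging the maps $\varphi_G$ on the various proper faces to agree on all pairwise (and higher) intersections so that they assemble into a well-defined continuous $\varphi$ on $\dd Q$, and then extending continuously over the interior — this is where one must be careful with the combinatorics of how faces of $Q$ sit inside one another versus how the pseudocells $G_I$ sit inside $|S'|$, and it may require choosing the $\varphi_G$ coherently by a secondary induction over the face poset rather than invoking $(2)_{<n}$ as a black box for each face independently.
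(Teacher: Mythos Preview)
Your proposal is correct and follows essentially the same inductive scheme as the paper: construct $\varphi$ skeleton by skeleton (extending over cones), and run the induction $(2)_{<n}\Rightarrow(1)_n\Rightarrow(2)_n$ with the five lemma handling the extra column in $(2)_n$. The gluing obstacle you flag at the end is exactly how the paper builds $\varphi$: a secondary induction over the face stratification, extending at each step because the target face is a cone.

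One point to clean up: in your argument for $(1)_n$ you assert that $\dd G_I$ is a homology sphere ``when $Q$ is Buchsbaum''. This is circular --- that $\dd G_I$ is a homology sphere is equivalent to $\lk_{S_Q}I$ being Cohen--Macaulay, i.e.\ to $S_Q$ being Buchsbaum, which is a \emph{consequence} of the lemma (Corollary~\ref{corQbuchSbuch}), not an input. Fortunately you do not actually need this computation: the inductive hypothesis $(2)_{<n}$ applied to the Cohen--Macaulay face $F_I$ already gives directly that $\varphi_*\colon H_*(F_I,\dd F_I)\to H_*(G_I,\dd G_I)$ is an isomorphism (it is the top $E^1$ entry of the spectral sequence for $F_I$), and this is all that is required. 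Similarly, in $(2)_n$ you need not compute $H_*(P,\dd P)$ separately nor invoke that $|S|$ is a homology sphere; the five lemma on the long exact sequences of the pairs $(Q,\dd Q)$ and $(P,\dd P)$, using acyclicity of $Q$ and $P$ together with the isomorphism $H_*(\dd Q)\cong H_*(|S|)$ from $(1)_n$, yields the isomorphism on the extra column without knowing its value in advance.
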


\begin{proof}
A map $\varphi$ is constructed inductively. $0$-skeleta of $Q$ and
$P$ are naturally identified since both correspond to the set of
maximal simplices of $S$. There always exists an extension of
$\varphi$ to higher-dimensional faces since all faces of $P$ are
cones. The statement is proved by the following scheme of
induction: $(2)_{\leqslant n-1}\Rightarrow (1)_n\Rightarrow
(2)_n$. The case $n=0$ is clear.

Let us prove the implication $(1)_n\Rightarrow (2)_n$. The map
$\varphi$ induces the homomorphism of the long exact sequences:
\[
\xymatrix{ \Hr_*(\dd
Q)\ar@{->}[r]\ar@{->}[d]&\Hr_*(Q)\ar@{->}[r]\ar@{->}[d]& H_*(Q,\dd
Q)\ar@{->}[r]\ar@{->}[d]&\Hr_{*-1}(\dd
Q)\ar@{->}[r]\ar@{->}[d]&\Hr_{*-1}(Q)\ar@{->}[d]\\
\Hr_*(\dd P)\ar@{->}[r]&\Hr_*(P)\ar@{->}[r]& H_*(P,\dd
P)\ar@{->}[r]&\Hr_{*-1}(\dd P)\ar@{->}[r]&\Hr_{*-1}(P) }
\]
The maps $\Hr_*(Q)\to\Hr_*(P)$ are isomorphisms since both groups
are trivial. The maps $\Hr_*(\dd Q)\to\Hr_*(\dd P)$ are
isomorphisms, since $\E{\dd Q}\conviso H_*(\dd Q)$, $\E{\dd
P}\conviso H_*(\dd P)$ and the spectral sequences are isomorphic
by $(1)_n$. Five lemma shows that $\varphi_*\colon\E{Q}^1_{n,*}\to
\E{P}^1_{n,*}$ is an isomorphism as well. This proves $(2)_n$.

Now we prove the implication $(2)_{\leqslant n-1}\Rightarrow
(1)_n$. Let $F_I$ be faces of $Q$ and $G_I$ faces of $P$. All
proper faces of $Q$ are Cohen--Macaulay of dimension $\leqslant
n-1$. Thus $(2)_{\leqslant n-1}$ implies the isomorphisms
$H_*(F_I,\dd F_I)\to H_*(G_I,\dd G_I)$ which sum together to the
isomorphism $\varphi_*\colon\E{\dd
Q}^1_{*,*}\stackrel{\cong}{\to}\E{\dd P}^1_{*,*}$.
\end{proof}

\begin{cor}\label{corQbuchSbuch}
If $Q$ is a Buchsbaum manifold with corners, then $S_Q$ is
Buchsbaum. Moreover, in this case $S_Q$ is a homology manifold. If
$Q$ is Cohen--Macaulay, then $S_Q$ is a homology sphere.
\end{cor}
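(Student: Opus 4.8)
The plan is to read off the statement directly from Lemma~\ref{lemmaUniverPosets} together with the combinatorial characterizations of Buchsbaum and Cohen--Macaulay simplicial posets recorded in Definition~\ref{definBuchCMposets} and Remark~\ref{remBuchPoset}. Suppose $Q$ is a Buchsbaum manifold with corners of dimension $n$, so every proper face of $Q$ is acyclic. To show that $S_Q$ is Buchsbaum, by Remark~\ref{remBuchPoset} it suffices to check that $\Hr_i(\lk_{S_Q}I;\ko)=0$ for every $\minel\neq I\in S_Q$ and $i\neq n-1-|I|$. The point is that the link $\lk_{S_Q}I$ is itself the underlying poset of a proper face $F_I$ of $Q$: a simplex of $S_Q$ above $I$ corresponds to a face of $Q$ contained in $F_I$, and this identification is order-reversing in the right way so that $\lk_{S_Q}I = S_{F_I}$. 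Now $F_I$ is a proper face, hence acyclic, hence Cohen--Macaulay as a manifold with corners of dimension $\dim F_I = n-|I| \le n-1$; by the inductive part $(2)$ of Lemma~\ref{lemmaUniverPosets} applied to $F_I$ one gets $\E{F_I}^r\conviso H_*(F_I)$ and $\E{S_{F_I}}^r \conviso H_*(|S_{F_I}|)$ are isomorphic spectral sequences, so $H_*(|S_{F_I}|;\ko)\cong H_*(F_I;\ko)$ is that of a point; in particular $|S_{F_I}|$ is acyclic and, being a simplicial complex (after barycentric subdivision) of dimension $n-|I|-1$, this forces $\Hr_i(\lk_{S_Q}I;\ko)=0$ for $i\neq n-1-|I|$. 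Hence $S_Q$ is Buchsbaum.

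For the homology manifold claim I would invoke the local homology sheaf from Example~\ref{exSheafLocHomol}: we must check $\loc_{n-1}\cong\ko$, equivalently (by Remark~\ref{remLocalSheavesOnPoset}) $\hh_0\cong\ko$, i.e.\ that $H_{n-1}(S_Q,S_Q\setminus\opst_{S_Q}I;\ko)\cong\ko$ for all $I\neq\minel$ with compatible restriction maps, and that the orientation sheaf is in fact constant. Here I use that $Q$ is assumed \emph{orientable} as a manifold with corners: via Lemma~\ref{lemmaUniverPosets}$(1)_n$ the face-preserving map $\varphi\colon Q\to P=\cone|S_Q|$ identifies the coskeleton (pseudocell) structure of $|S_Q|$ with the face structure of $Q$, and $H_{n-1-|I|}(G_I,\dd G_I;\ko)\cong H_{\dim F_I}(F_I,\dd F_I;\ko)\cong\ko$ by Poincaré--Lefschetz duality for the orientable manifold with corners $F_I$ (this is exactly the computation remarked right before Lemma~\ref{lemmaUniverPosets}). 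The restriction maps $\hh_0(I<J)$ are the incidence-corrected attaching maps $m^0_{I,J}$, and orientability of $Q$ makes a coherent choice of generators possible, so $\hh_0\cong\ko$ globally; therefore $S_Q$ is an orientable homology $(n-1)$-manifold.

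For the last sentence, if $Q$ is Cohen--Macaulay then in addition $Q$ itself is acyclic, so by $(2)_n$ of Lemma~\ref{lemmaUniverPosets} the spectral sequences $\E{Q}^r$ and $\E{P}^r$ are isomorphic and hence $H_*(|S_Q|;\ko)\cong H_*(Q;\ko)$ is trivial above degree $0$; combined with the fact (already established) that $S_Q$ is a Buchsbaum homology manifold of dimension $n-1$ with constant orientation sheaf, a simplicial poset which is simultaneously a homology manifold and acyclic is a homology sphere. The main obstacle I anticipate is not any of the acyclicity bookkeeping but rather making precise the identification $\lk_{S_Q}I \cong S_{F_I}$ of the link with the face poset together with the matching of the coskeleton/pseudocell filtration of $|S_Q|$ with the intrinsic face filtration of $Q$ — i.e.\ carefully checking that $\varphi$ from Lemma~\ref{lemmaUniverPosets} really does send $\dd G_I$ to $\dd F_I$ compatibly so that the Poincaré--Lefschetz computation on $Q$ transports to the statement about $\loc_{n-1}$; everything else is a formal consequence of Lemma~\ref{lemmaUniverPosets} and Remark~\ref{remBuchPoset}.
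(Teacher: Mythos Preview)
Your second paragraph already contains the correct and direct argument, and it is essentially what the paper intends: Lemma~\ref{lemmaUniverPosets}$(1)_n$ gives an isomorphism $\E{\dd Q}^1_{*,*}\cong\E{S_Q}^1_{*,*}$, which unpacks to $H_*(F_I,\dd F_I)\cong H_*(G_I,\dd G_I)$ for each $I\neq\minel$. Since the left side is $\ko$ concentrated in the top degree (the remark just before the lemma), this immediately shows $\hh_q=0$ for $q\neq 0$ (hence $S_Q$ is Buchsbaum by Remark~\ref{remLocalSheavesOnPoset}) and $\hh_0(I)\cong\ko$, with orientability of $Q$ making $\hh_0$ globally constant. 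So the detour through links in your first paragraph is unnecessary.

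More importantly, your first and third paragraphs share a genuine error: you misapply part $(2)$ of Lemma~\ref{lemmaUniverPosets}. That part asserts $\E{F_I}^r\cong\E{P_I}^r$ with $P_I=\cone|S_{F_I}|$, \emph{not} $\E{F_I}^r\cong\E{S_{F_I}}^r$; the limit is $H_*(P_I)=H_*(\pt)$, which carries no information about $H_*(|S_{F_I}|)$. Consequently your claim that $|S_{F_I}|$ is acyclic is false --- it is a homology sphere of dimension $\dim F_I-1$, with $\Hr_{\dim F_I-1}\cong\ko\neq 0$. The same confusion recurs in the Cohen--Macaulay case: $(2)_n$ does not yield $H_*(|S_Q|)\cong H_*(Q)$, and the assertion ``homology manifold and acyclic implies homology sphere'' is false (for $n\geqslant 2$ an orientable homology $(n-1)$-manifold has $H_{n-1}\cong H^0\cong\ko$ by Poincar\'e duality, so it cannot be acyclic). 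The correct route is to use $(1)_n$, giving $H_*(|S_Q|)\cong H_*(\dd Q)$, and then the long exact sequence of the pair $(Q,\dd Q)$ together with acyclicity of $Q$ and $H_*(Q,\dd Q)$ concentrated in degree $n$ shows that $\dd Q$, hence $|S_Q|$, has the homology of $S^{n-1}$.
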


From now on we suppose that $Q$ is Buchsbaum, as stated in the
condition of Theorem \ref{thmToricSpecSec}. Thus $S_Q$ is
Buchsbaum as well.

Let us return to the spaces $Y$ and $X$ over $Q$. As before, let
$F_I$ be the face of $Q$ corresponding to $I\in S_Q$. Let
$Y_I=\rho^{-1}(F_I)$ and $X_I=f(Y_I)$ be the corresponding subsets
of $Y$ and $X$ respectively. Actually, $X_I\subset X$ is a closed
submanifold of dimension $2\dim F_I$, called a face submanifold.
We set $\dd Y_I=\rho^{-1}(\dd F_I)$ and $\dd X_I=f(\dd Y_I)$ (the
set $\dd X_I$ does not have the meaning of a boundary in a
topological sense, this is just a conventional notation). Note
that $Y_{\minel}=Y$ and $X_{\minel}=X$.

We have
\[
\E{Y}^1_{p,q}\cong H_{p+q}(Y_p,Y_{p-1})\cong
\bigoplus_{|I|=n-p}H_{p+q}(Y_I,\dd Y_I)
\]
\[
\E{X}^1_{p,q}\cong H_{p+q}(X_p,X_{p-1})\cong
\bigoplus_{|I|=n-p}H_{p+q}(X_I,\dd X_I)
\]

\begin{rem}\label{remXYsameForPeqN}
The map $f_*^1\colon \E{Y}^1_{n,q}\to \E{X}^1_{n,q}$, which
coincides with $f_*\colon H_*(Y,\dd Y)\to H_*(X,\dd X)$, is an
isomorphism since the identification $\sim$ of Construction
\ref{conModelSpace} touches only the boundary $\dd Y$, thus $Y/\dd
Y\cong X/\dd X$.
\end{rem}

The space $Y_I$ is a principal $T^n$-bundle over $Q_I$. For each
$I\in S\setminus\minel$, the face $Q_I$ is acyclic. Thus there
exists a trivialization $Y_I\cong Q_I\times T^n$ and we have
\begin{equation}\label{eqStructHomolY}
H_{p+q}(Y_I,\dd Y_I)\cong \bigoplus_{i+j=p+q} H_i(F_I,\dd
F_I)\otimes H_j(T^n)\cong H_q(T^n),
\end{equation}
(the groups $H_i(F_I,\dd F_I)$ vanish for $i\neq p$, and
$H_p(F_I,\dd F_I)\cong \ko$). Similarly, for $X$ we have the
identification
\[
H_*(X_I,\dd X_I)\cong H_*(F_I\times T^n/T_I, \dd F_I\times
T^n/T_I),
\]
thus
\begin{equation}\label{eqStructHomolX}
H_{p+q}(X_I,\dd X_I)\cong H_q(T^n/T_I).
\end{equation}
Consider the graded sheaf $\hh_q^Y$ on $S_Q$ which takes the value
$H_{p+q}(Y_I,\dd Y_I)$ on each $I\in S_Q$ (including $I=\minel$)
with the restriction maps extracted from the differential
$\dif{Y}^1$ similar to Construction \ref{conSheafOnS}. By
\eqref{eqStructHomolY}, the truncated part
$\hht_*^Y=\bigoplus_q\hht_q^Y$ (see Remark
\ref{remTruncateTheSheaf}) is the locally constant sheaf $\La$
valued by exterior algebras.

Similarly, we can define a graded sheaf $\hh_q^X$ on $S_Q$ which
takes the value $H_{p+q}(X_I,\dd X_I)$ on $I\in S$. Its truncated
part $\hht_*^X=\bigoplus_q\hht_q^X$ is the sheaf of quotient
algebras $\La/\I$ according to \eqref{eqStructHomolX}. Indeed, it
is easily seen that the homology algebra $H_*(T^n/T_I)$ is
naturally isomorphic to the quotient of $H_*(T^n)/\I(I)$, where
$\I(I)$ is the ideal generated by the subspace $H_1(T_I)\subset
H_1(T^n)$.

The map $f_*^1\colon \E{Y}^1_{*,*}\to \E{X}^1_{*,*}$ is equal to
the map $f_*\colon \Cc^*(S;\hh_*^Y)\to \Cc^*(S;\hh_*^X)$. This
last map coincides with $f_*\colon \Cc^*(S;\La)\to
\Cc^*(S;\La/\I)$ away from $\minel$.

\begin{lemma}\label{lemmaIshortNonTrunc}
There exists a short exact sequence of graded sheaves
\[
0\rightarrow \I\rightarrow \hh^Y \rightarrow \hh^X \rightarrow 0.
\]
\end{lemma}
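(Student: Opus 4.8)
The plan is to reduce the statement to an objectwise computation of the kernel and cokernel of $f_*$, using the identifications already recorded just before the lemma, and treating the minimal element $\minel$ separately. Recall that for $I\neq\minel$ the acyclicity of $F_I$ gives, via the trivialization $Y_I\cong F_I\times T^n$ and \eqref{eqStructHomolY}, an identification $\hh^Y(I)\cong\La(I)=H_*(T^n)$, and via \eqref{eqStructHomolX} together with $H_*(T^n/T_I)\cong H_*(T^n)/\I(I)$ an identification $\hh^X(I)\cong\La(I)/\I(I)$; the last isomorphism uses that $H_1(T_I)$ is a direct summand of $H_1(T^n)$, i.e.\ the $(\ast)$-condition. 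Moreover these were already asserted to assemble into isomorphisms of sheaves $\hht_*^Y\cong\La$ and $\hht_*^X\cong\La/\I$, with $f_*$ corresponding, away from $\minel$, to the canonical projection $\La\twoheadrightarrow\La/\I$ --- over the interior $F_I^{\circ}$ the map $f$ of Construction \ref{conModelSpace} is the identity on $F_I^{\circ}$ times the quotient $T^n\to T^n/T_I$.

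I would first verify that $f_*$ is a morphism of sheaves $\hh^Y\to\hh^X$, not merely a map of the cochain complexes $\Cc^*(S_Q;\hh^Y)\to\Cc^*(S_Q;\hh^X)$. Since $f_*^1\colon\E{Y}^1_{p,q}\to\E{X}^1_{p,q}$ has bidegree $(0,0)$, it is block-diagonal for the decompositions $\E{Y}^1_{p,q}=\bigoplus_{|I|=n-p}H_{p+q}(Y_I,\dd Y_I)$ and $\E{X}^1_{p,q}=\bigoplus_{|I|=n-p}H_{p+q}(X_I,\dd X_I)$, and its commutation with $\dif{Y}^1$ and $\dif{X}^1$ forces, for each $I\less{1}I'$, the identity $f_*(I')\circ\hh^Y(I\leqslant I')=\hh^X(I\leqslant I')\circ f_*(I)$ (the common incidence sign $[I':I]$ cancels); this is exactly compatibility with all restriction maps. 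Hence the objectwise kernel of $f_*$ is a subsheaf and its cokernel a quotient sheaf, and since exactness of a sequence of sheaves over a finite poset is checked at each simplex, it suffices to establish exactness of $0\to\I(I)\to\hh^Y(I)\to\hh^X(I)\to0$ for every $I$. For $I\neq\minel$ this is, under the identifications above, the tautological exact sequence $0\to\I(I)\to\La(I)\to\La(I)/\I(I)\to0$, which preserves the inner grading. For $I=\minel$ we have $\I(\minel)=0$ (the subsheaf $\I$ of $\La$ vanishes there), while by Remark \ref{remXYsameForPeqN} the relation $\sim$ touches only $\dd Y$, so $Y/\dd Y\cong X/\dd X$ and $f_*\colon\hh^Y(\minel)\to\hh^X(\minel)$ is an isomorphism; thus the sequence at $\minel$ is exact too. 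Finally the inclusion $\I\hookrightarrow\hh^Y$ is itself a sheaf map, being $\I\subset\La=\hht_*^Y$ off $\minel$ and the zero inclusion at $\minel$, so splicing the objectwise sequences gives the short exact sequence of graded sheaves.

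The step carrying the actual content, rather than the formal patching above, is the compatibility of the objectwise identifications $\hh^Y(I)\cong\La(I)$ and $\hh^X(I)\cong\La(I)/\I(I)$ with the restriction maps read off from $\dif{Y}^1$ and $\dif{X}^1$: one must choose the trivializations $Y_I\cong F_I\times T^n$ coherently under face inclusions so that $\hh^Y(I\leqslant I')$ becomes the locally constant structure map of $\La$, and likewise for $\hh^X$ and $\La/\I$. This is precisely the content of the identifications $\hht_*^Y\cong\La$ and $\hht_*^X\cong\La/\I$ stated in the paragraphs preceding the lemma; granting it, the lemma follows at once from the two objectwise exact sequences together with $\I(\minel)=0$ and the isomorphism $f_*\colon\hh^Y(\minel)\to\hh^X(\minel)$.
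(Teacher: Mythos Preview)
Your proof is correct and follows essentially the same approach as the paper: both reduce to the objectwise exact sequence $0\to\I(I)\to\La(I)\to\La(I)/\I(I)\to 0$ for $I\neq\minel$ via the identifications $\hht^Y\cong\La$, $\hht^X\cong\La/\I$, and handle $\minel$ separately using $\I(\minel)=0$ together with Remark~\ref{remXYsameForPeqN}. The paper packages this as a $3\times 3$ diagram with the truncated sheaves on top and the quotients concentrated at $\minel$ on the bottom, while you spell out the two cases directly and are more explicit about why $f_*$ is a sheaf morphism; the content is the same.
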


\begin{proof}
This follows from the diagram
\[
\xymatrix{
&&0\ar@{->}[d]&0\ar@{->}[d]&\\
0\ar@{->}[r]&\I\ar@{^{(}->}[r]\ar@{=}[d]&
\hht^Y\ar@{->>}[r]\ar@{^{(}->}[d]&\hht^X\ar@{->}[r]\ar@{^{(}->}[d]&0\\
0\ar@{->}[r]&\I\ar@{^{(}->}[r]&\hh^Y\ar@{->>}[r]\ar@{->>}[d]&
\hh^X\ar@{->}[r]\ar@{->>}[d]&0\\
&&\hh^Y/\hht^Y\ar@{->}[r]^{\cong}\ar@{->}[d]&\hh^X/\hht^X\ar@{->}[d]&\\
&&0&0& }
\]
in which all vertical and two horizontal lines are exact. The
lower sheaves are concentrated in $\minel\in S_Q$ and the graded
isomorphism between them is due to Remark~\ref{remXYsameForPeqN}.
\end{proof}

Finally, the short exact sequence of Lemma
\ref{lemmaIshortNonTrunc} induces the long exact sequence in sheaf
cohomology:
\begin{equation}\label{eqLongExact}
\rightarrow H^{i-1}(S_Q;\I^{(q)}) \rightarrow H^{i-1}(S_Q;\hh^Y_q)
\stackrel{f^2_*}{\longrightarrow} H^{i-1}(S_Q;\hh^X_q)
\longrightarrow H^{i}(S_Q;\I^{(q)})\rightarrow
\end{equation}

The poset $S_Q$ is a homology manifold. Thus its structure sheaf
is constant: $\hh_0\cong \ko$. Corollary \ref{corKeyCorol} implies
that the groups $H^i(S_Q;\I^{(q)})$ vanish for $i\leqslant n-1-q$.
From the long exact sequence \eqref{eqLongExact} we can see that
the map
\[
f_*\colon H^{i-1}(S_Q;\hh^Y_q)\to H^{i-1}(S_Q;\hh^X_q)
\]
is an isomorphism for $i\leqslant n-1-q$ and injective for
$i=n-q$. This map coincides with
\[
f^2_*\colon \E{Y}^2_{n-i,q} \to \E{X}^2_{n-i,q}.
\]
The change of indices $p=n-i$ concludes the proof of Theorem
\ref{thmToricSpecSec}.

\begin{rem}
Note, that the similar argument proves that the map $f^*\colon
\E{\dd Y}^2_{p,q}\to \E{\dd X}^2_{p,q}$ is an isomorphism for
$p>q$ and injective for $p=q$.
\end{rem}

\section*{Acknowledgements}
I am grateful to prof. Mikiya Masuda for his hospitality and for
the wonderful environment surrounding me in Osaka City University.
The problem of computing the cohomology ring of toric origami
manifolds, on which we worked since 2013, was a great motivation
for the current research (and served as a good setting to test
general hypotheses).


\end{document}